\newcommand{\R}{\mathbb{R}}
\newcommand{\dif}{\textrm{d}}
\newcommand{\I}[1]{1_{#1}}
\renewcommand{\Vec}[1]{\textrm{Vec}\left(#1\right)}
\newcommand{\tr}[1]{\textrm{tr}\left(#1\right)}
\newcommand{\rank}[1]{\textnormal{rank}\left(#1\right)}
\newcommand{\norm}[2]{\left|\left|#2\right|\right|_{#1}}
\newtheorem{definition}{Definition}
\newtheorem{lemma}{Lemma}
\newtheorem{proposition}{Proposition}
\newtheorem{theorem}{Theorem}
\newtheorem{remark}{Remark}
\begin{document}

\author{Mario Diaz\thanks{%
Department of Mathematics and Statistics, Queen's University,\ Kingston, ON
Canada, \textit{13madt@queensu.ca}} \and Victor P\'{e}rez-Abreu \thanks{%
Department of Probability and Statistics, CIMAT, Guanajuato, Mexico, \textit{%
pabreu@cimat.mx}}}
\title{\textbf{Random Matrix Systems with Block-Based Behavior and
Operator-Valued Models}}
\date{\today}
\maketitle

\begin{abstract}
A model to estimate the asymptotic isotropic mutual information of a
multiantenna channel is considered. Using a block-based dynamics and the
angle diversity of the system, we derived what may be thought of as the
operator-valued version of the Kronecker correlation model. This model turns
out to be more flexible than the classical version, as it incorporates both
an arbitrary channel correlation and the correlation produced by the
asymptotic antenna patterns. A method to calculate the asymptotic isotropic
mutual information of the system is established using operator-valued free
probability tools. A particular case is considered in which we start with
explicit Cauchy transforms and all the computations are done with diagonal
matrices, which make the implementation simpler and more efficient.
\end{abstract}



\section{Introduction}

Random matrices and free probability are areas of applied probability with
increasing importance in the area of multiantenna wireless systems, see for
example \cite{Coulliet2011}. One key problem in the stochastic analysis of
these systems has been the study of their asymptotic performance with
respect to the number of antennas. The first answer to this question is the
groundbreaking work by Telatar \cite{Telatar1999}, who, describing the
system as a random matrix with statistically independent entries, showed
that the capacity of this system is infinite. Since this independence
condition might be restrictive, several further proposals have been made
over the past decade. As a result, a few models have emerged to take into
account some instances of correlation in the system \cite{Foschini2000}, 
\cite{Mestre2003}, \cite{Tulino2005}.

Operator-valued free probability theory has proved to be a powerful tool to
study block random matrices \cite{Benaych2009,Shlyakhtenko1996}. This has
made possible to analyze certain systems exhibiting some block-based
dynamics \cite{Far2008,Speicher2012}. With recent developments in
operator-valued free probability theory \cite{Belinschi2013a,Belinschi2013b}%
, simple matricial iterative algorithms now allow us to find the asymptotic
spectrum of sums and products of free operator-valued random variables.

The purpose of the present paper is show the significance of these new tools
by studying a particular application in wireless communications. In
particular, we study an operator-valued Kronecker correlation model based on
an arbitrarily correlated finite dimensional multiantenna channel. From a
block matrix dynamics and a parameter related to the angle diversity of the
system, an operator-valued equivalent is derived and then a method to
calculate the asymptotic isotropic mutual information is developed using
tools from operator-valued free probability. The model allows using
information related to the asymptotic antenna patterns of the system. To our
best knowledge, this the first time that a model with these characteristics
is analyzed.

More precisely, a multiantenna system is an electronic communication setup
in which both the transmitter and the receiver use several antennas. The
input and the output of the system can be thought of as complex vectors $%
u=(u_{1},\cdots,u_{n_{T}})^{\top}$ and $v=(v_{1},\cdots,v_{n_{R}})^{\top} $,
where $n_{T}$ is the number of transmitting antennas and $n_{R}$ is the
number of receiving antennas. The system response is characterized by the
linear model 
\begin{equation*}
v=Hu+w, 
\end{equation*}
where $H$ is an $n_{R}\times n_{T}$ complex random matrix that models the
propagation coefficients from the transmitting to the receiving antennas and 
$w$ is a circularly symmetric Gaussian random vector with independent
identically distributed unital power entries.

In a correlated multiantenna system, there is correlation between the
propagation coefficients. Namely, the random matrix $H$ is such that the
random variables $\{H_{i,j}:i=1,\ldots,n_{R};j=1,\ldots,n_{T}\}$ are not
necessarily independent. It is customary to take the random variables
composing $H$ with circularly symmetric Gaussian random law \cite%
{Telatar1999}. In this context, the joint distribution of the entries of $H$
depends only on the covariance function $\sigma(i,j;i^{\prime},j^{\prime}):=%
\mathbb{E}\left( H_{i,j}\overline{H}_{i^{\prime},j^{\prime}}\right) $ for $%
i,i^{\prime}\in\{1,\ldots,n_{R}\}$ and $j,j^{\prime}\in\{1,\ldots,n_{T}\}$.

For a fixed rate $n_{T}/n_{R}$, it is known that the capacity of a
multiantenna system grows linearly with the number of antennas of the system
as long as the matrix $H$ has independent entries \cite{Telatar1999}. This
shows the well-behaved scalability properties of multiantenna systems.
However, correlation may have a negative effect on the performance of the
system. Therefore, it is necessary to estimate quantitatively the effect
that correlation may have.

Throughout this paper we will assume that the transmitter uses an
isotropical scheme, i.e., $\mathbb{E}\left( uu^{\ast}\right) =\dfrac{P}{n_{T}%
}\mathrm{I}_{n_{T}}$ where $P$ is the transmitter power. In this case, a
canonical way to quantify the effect of correlation is by means of the
asymptotic isotropic mutual information\footnote{%
Observe that this quantity is not the capacity of the system since the input
is restricted to be isotropic.} per antenna \cite{Telatar1999}.
Specifically, suppose that $H_{1}:=H$ and for each $N\geq2$ the $%
n_{R}^{(N)}\times n_{T}^{(N)}$ random matrix $H_{N}$ describes the channel
behavior when there are $n_{T}^{(N)}$ transmitting antennas and $n_{R}^{(N)}$
receiving antennas. Moreover, suppose that both $(n_{T}^{(N)})_{N\geq1}$ and 
$(n_{R}^{(N)})_{N\geq1}$ are increasing sequences and $%
n_{T}^{(N)}/n_{R}^{(n)}$ converges to a positive real number. Then, the
asymptotic isotropic mutual information per antenna $I_{\infty}$ is 
\begin{equation*}
I_{\infty}=\lim_{N\rightarrow\infty}\mathbb{E}\left( \frac{1}{n_{R}^{(N)}}%
\log\det\left( I+\frac{P}{n_{T}^{(N)}}H_{N}H_{N}^{\ast}\right) \right) , 
\end{equation*}
as long as the limit exists. A common phenomena in random matrix theory is
that the sequence of arguments in the expected value above converges almost
surely to a constant, and under mild conditions also in mean. Therefore, the
asymptotic isotropic mutual information per antenna is given, essentially,
by the a.s. limit of the aforementioned sequence.

Therefore, in order to find $I_{\infty}$, it is necessary to derive a model
for the sequence of random matrices $(H_{N})_{N\geq1}$ that approximates the
channel behavior in the finite size regime and then compute the asymptotic
quantity $I_{\infty}$.

In this paper we use an alternative method described in four steps:

\begin{enumerate}
\item Assign an operator-valued matrix $\mathbf{H}$ to the matrix $H$;

\item Compute the operator-valued Cauchy transform of $\mathbf{HH}^{\ast}$;

\item Via the Stieltjes inversion formula, recover the distribution of $%
\mathbf{HH}^{\ast}$, call it $F$;

\item Compute $I_{\infty}$ as 
\begin{equation}
I_{\infty}=\int\log(1+P\xi)F(\text{d}\xi).   \label{AsymptoticCapacity}
\end{equation}
\end{enumerate}

The operator-valued matrix $\mathbf{H}$ can be thought of as the asymptotic
operator-valued equivalent of the channel $H$ \cite{Speicher2012}. In this
sense, the common approach consists of giving a model for the finite size
regime, computing the mutual information, and taking the limit. On the other
hand, the alternative approach takes \textit{limits in the model}, replacing
matrices by operator-valued matrices, and then calculates the mutual
information. Of course, these approaches are intimately related. Actually,
in the traditional case, they provide the same results\footnote{%
For example, in the iid case, we know that the empirical spectral
distribution of $H_{N}H_{N}^{\ast}/n_{T}$ converges in distribution almost
surely to the Marchenko--Pastur distribution \cite{Telatar1999}. This is
equivalent to saying that $H_{N}H_{N}^{\ast}/n_{T}$ converges in
distribution to a noncommutative random variable whose analytical
distribution $F$ is the corresponding Marchenko--Pastur distribution, which
gives the asymptotic mutual information (1).}, but we prefer the latter
approach since it is conceptually easier to understand and carry out,
providing a powerful tool for modelling.

We will see that this way of thinking goes well with channels exhibiting a
block-based behavior. In particular, the operator-valued matrix assigned in
step 1 carries the block structure of the channel and some other features of
the system. In the example analyzed here, these features include the effect
of the asymptotic antenna patterns and the inclusion of the starting finite
dimensional channel correlation. To illustrate the kind of tools that may be
useful in the assigning process at step 1, in the next section we retrieve a
block-based Kronecker model from an angular-based model and derive the
operator-valued equivalent $\mathbf{H}$.

In Section \ref{Section:Model} we derive the proposed operator-valued
Kronecker correlation model. In Section \ref{Section:AsymptoticCapacity} we
discuss the asymptotic isotropic mutual information of our model using tools
from operator-valued free probability. In Section \ref{Section:DiagonalCase}
we consider a particular example where the implementation is simple but at
the same time flexible enough to be applied in several interesting cases,
like some symmetric channels. In Section \ref{Section:NumericalComparison}
we compare, through the example of a finite dimensional system, the mutual
information predicted by the usual Kronecker correlation model against the
results from the proposed operator-valued alternative. In Appendix A we
summarize the notation, the background, and the prerequisites from
operator-valued free probability theory. In Appendix B we prove Theorem \ref%
{Thm:ExtremalCases} on two extreme behaviors of the model regime. In
Appendix C we compute some of the operator-valued Cauchy transforms required
in this paper.

\section{The Angular Based Model and Its Operator-Valued Equivalent}

\label{Section:Model}

The proposed model to approximate the channel behavior in the finite size
regime is derived as follows. Suppose that for a fixed $N\in\mathbb{N}$,
each antenna of the original system is replaced by $N$ new antennas located
around the position of the original one. Thus, the new system has $n_{T}N$
transmitting and $n_{R}N$ receiving antennas. Figure \ref{fig:Antennas}
shows the original system for $n_{T}=1$ and $n_{R}=2$ together with the
corresponding virtual one for $N=2$.

\begin{figure}[th]
\centering
\includegraphics[width=0.7\textwidth]{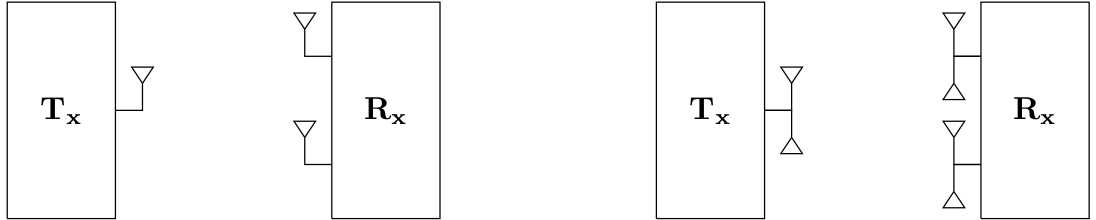}
\caption{On the left the original $1\times2$ system. On the right the
virtual $2\times4$ system corresponding to $N=2$.}
\label{fig:Antennas}
\end{figure}

For any given $N\in\mathbb{N}$, the channel matrix $H_{N}$ for this $%
n_{R}N\times n_{T}N$ system will have the form 
\begin{equation*}
H_{N}=\left( 
\begin{matrix}
H_{N}^{(1,1)} & \cdots & H_{N}^{(1,n_{T})} \\ 
\vdots & \ddots & \vdots \\ 
H_{N}^{(n_{R},1)} & \cdots & H_{N}^{(n_{R},n_{T})}%
\end{matrix}
\right) , 
\end{equation*}
where $H_{N}^{(i,j)}$ is the $N\times N$ matrix whose entries are the
coefficients between the new antennas that come from the original $i$
receiving and $j$ transmitting antennas.

\subsection{Statistics of the channel and block matrix structure}

We now derive a model for $H_{N}=(H_{N}^{(i,j)})_{i,j}$ that takes into
account the statistics of the channel matrix $H$ and the block structure
exhibited above. First, fix a block $H_{N}^{(i,j)}$, and for notational
simplicity denote it by $A$. This matrix $A$ should reflect the behavior of
a scalar channel between two antennas of the original system when these are
replaced by $N$ antennas each.

In a regime of a very high density of antennas per unit of space, any two
pairs of antennas close enough are likely to experience very similar fading.
Since as $N\rightarrow\infty$ the new antennas are closer to each other,
then the propagation coefficients between them are prone to be correlated.
As an extreme case, we suppose that all the propagation coefficients between
the antennas involved in $A$ have the same norm, and without loss of
generality we set this to be one\footnote{%
Latter, we will incorporate the effect of these norms in the covariance of
our operator-valued equivalent.}. This means that these coefficients produce
the same power losses and the differences between them come from the
variation that they induce in the signal's phases. With this in mind, we
will suppose that for $1\leq k,l\leq N$, 
\begin{equation*}
A_{k,l}=\exp(\gamma\mathrm{i}\theta_{k,l}) 
\end{equation*}
where $\mathrm{i}=\sqrt{-1}$, $\theta_{k,l}$ is a real random variable and $%
\gamma>0$ is a physical parameter that reflects the statistical variation of
the phases of the incoming signals. In some geometrical models, this
statistical variation of the phases has been used, along with the angle of
arrival and the angle spread, to study the capacity of multiantenna channels 
\cite{Foschini2000}.

Some of the physical factors that have the most impact on the correlation of
an antenna array are related to either the physical parameters of the
antennas or to local scatterers. Since these factors are different for each
end of the communication link then, borrowing the intuition from the usual
Kronecker model, it is natural to take the matrix $\theta=(\theta
_{k,l})_{k,l=1}^{N}$ as a separable or Kronecker correlated matrix, that is, 
\begin{equation*}
\theta=RXT 
\end{equation*}
where $R$ and $T$ are the square roots of suitable correlation matrices and $%
X$ is a random matrix with independent entries having the standard Gaussian
distribution. It is important to point out that $A$ is not Kronecker
correlated.

\subsection{Extreme regimes of the parameter $\protect\gamma$ of the system}

From a modelling point of view, the case $\gamma\rightarrow\infty$
represents the situation in which the environment is rich enough to ensure a
high diversity in the angles of the propagation coefficients. On the other
hand, the case $\gamma\rightarrow0$ represents a system in which the
propagation coefficients in the given block are almost the same.
Intuitively, the first case is better in terms of $\gamma$, since we should
be able to recover the multiantenna diversity via the angle diversity; while
in the second case we almost lose the diversity advantage of a multiantenna
system over a single antenna system.

In these limiting cases the following holds. We denote by $\lambda
_{1}(\cdot )\geq \cdots \geq \lambda _{N}(\cdot )$ the ordered eigenvalues
of an Hermitian matrix.

\begin{theorem}
\label{Thm:ExtremalCases} Assume that $R$ and $T$ are full rank. For $N$
fixed, as $\gamma \rightarrow \infty $, 
\begin{equation*}
(\lambda _{1}(AA^{\ast }),\ldots ,\lambda _{N}(AA^{\ast }))\Rightarrow
(\lambda _{1}(UU^{\ast }),\ldots ,\lambda _{N}(UU^{\ast }))
\end{equation*}%
where $U$ is a matrix with i.i.d. entries with uniform distribution on the
unit circle.

Suppose that $(\gamma _{N})_{N\geq 1}$ is a sequence of positive real
numbers such that $\gamma _{N}\rightarrow 0$ as $N\rightarrow \infty $.
Then, almost surely, $F^{\gamma _{N}^{-2}AA^{\ast }}\Rightarrow F$ as $%
N\rightarrow \infty $ where $F$ is the asymptotic eigenvalue distribution of 
$\theta \theta ^{\ast }$.
\end{theorem}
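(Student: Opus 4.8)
The plan is to treat the two limiting regimes separately, since they rely on different mechanisms. For the regime $\gamma\to\infty$ with $N$ fixed, the key observation is that the entries $A_{k,l}=\exp(\gamma\mathrm{i}\,\theta_{k,l})$ are each of the form $e^{\mathrm{i}\cdot(\text{scaled Gaussian})}$, and that the joint law of $(A_{k,l})_{k,l}$ converges weakly to the law of a matrix $U$ with i.i.d.\ uniform-on-the-circle entries. First I would reduce the claim about eigenvalues to a claim about the matrix itself: since $M\mapsto(\lambda_1(MM^{\ast}),\ldots,\lambda_N(MM^{\ast}))$ is a continuous map from $\Mat{N}{\C}$ to $\R^N$, by the continuous mapping theorem it suffices to show $A\Rightarrow U$ as random elements of $\Mat{N}{\C}$. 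For this, I would show that each entry $A_{k,l}$ converges in distribution to a uniform random variable on the unit circle, and that asymptotic independence across $(k,l)$ holds. The entrywise convergence follows because $\theta_{k,l}$ has an absolutely continuous law (it is a linear combination of Gaussians via $\theta=RXT$, and full rank of $R$ and $T$ guarantees nondegeneracy), so $\gamma\theta_{k,l}\bmod 2\pi$ equidistributes on $[0,2\pi)$ as $\gamma\to\infty$ — this is a Riemann–Lebesgue / Weyl equidistribution argument applied to the characteristic function $\E{e^{\mathrm{i}n\gamma\theta_{k,l}}}\to 0$ for every nonzero integer $n$. The same computation applied to joint characters $\prod_{(k,l)\in S}e^{\mathrm{i}n_{k,l}\gamma\theta_{k,l}}$ shows the limiting entries are independent and each uniform, because the exponent is again a nondegenerate Gaussian whenever the integer vector $(n_{k,l})$ is nonzero (here full rank of $R,T$ is used again to ensure no nonzero integer combination of the $\theta_{k,l}$ is almost surely constant). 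The main obstacle in this half is making the nondegeneracy argument clean: one must verify that for every nonzero integer tuple the corresponding linear functional of the Gaussian matrix $X$ has strictly positive variance, which is exactly where the full-rank hypothesis on $R$ and $T$ enters.

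For the regime $\gamma_N\to 0$ with $N\to\infty$, the idea is that $\gamma_N^{-1}A_{k,l}=\gamma_N^{-1}\exp(\gamma_N\mathrm{i}\,\theta_{k,l})$ behaves, for small $\gamma_N$, like $\gamma_N^{-1}+\mathrm{i}\,\theta_{k,l}-\tfrac{\gamma_N}{2}\theta_{k,l}^2+\cdots$, so that $\gamma_N^{-1}A\approx \gamma_N^{-1}J+\mathrm{i}\,\theta$ where $J$ is the all-ones matrix. First I would Taylor-expand to write $\gamma_N^{-1}A = \gamma_N^{-1}J+\mathrm{i}\theta + E_N$ where the error matrix $E_N$ has entries $O(\gamma_N\theta_{k,l}^2)$ in a uniform sense. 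The plan is then to control spectral distributions via a perturbation/rank-plus-small-norm argument: the rank-one term $\gamma_N^{-1}J$ contributes at most one outlying eigenvalue to $(\gamma_N^{-1}A)(\gamma_N^{-1}A)^{\ast}$, so by the rank inequality for empirical distribution functions (interlacing) it does not affect the limiting empirical spectral distribution; the term $\mathrm{i}\theta$ has $(\mathrm{i}\theta)(\mathrm{i}\theta)^{\ast}=\theta\theta^{\ast}$, whose empirical spectral distribution converges a.s.\ to $F$ by the standard separable/Kronecker-model random matrix result (Marchenko–Pastur-type theorem for $RXT$); and the error term must be shown to be negligible in the appropriate norm — ideally one bounds $\Fnorm{E_N}^2/N\to 0$ a.s., and invokes the bound $\int(F^{B_N}-F^{C_N})$-type inequality (e.g.\ the fact that $\sup_x|F^{BB^{\ast}}(x)-F^{CC^{\ast}}(x)|$ or the Lévy distance is controlled by $\Fnorm{B-C}^2/N$) to transfer convergence from $\mathrm{i}\theta$ to $\gamma_N^{-1}A$. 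The delicate point here is the error term: $\E{\theta_{k,l}^2}$ is bounded (it depends on $R,T$ but not on $N$ in the relevant scaling only if the correlation matrices are suitably normalized), but one needs a uniform-in-$N$ control of $\frac{1}{N}\sum_{k,l}\theta_{k,l}^4$ or similar to guarantee $\frac{1}{N}\Fnorm{E_N}^2=O(\gamma_N^2)\cdot\frac{1}{N}\sum\theta_{k,l}^4\to 0$ almost surely; this requires a moment assumption or a.s.\ law-of-large-numbers argument on the fourth moments of the $\theta_{k,l}$, which I would extract from the Gaussianity of $X$ together with bounds on the operator norms of $R$ and $T$.

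I expect the second half — the $\gamma_N\to 0$ regime — to be the main obstacle, and within it the negligibility of the Taylor remainder $E_N$ in the normalized Frobenius norm. The rank-one correction and the identification of the $\mathrm{i}\theta$ limit are both standard, but controlling $E_N$ uniformly as $N\to\infty$ forces one to be careful about how the correlation matrices $R$ and $T$ scale with $N$ and about almost-sure control of high moments of the entries of $\theta=RXT$. If the model's standing assumptions already pin down that $\theta\theta^{\ast}$ has a well-defined limiting spectral distribution $F$ (which the theorem statement presupposes), then the same hypotheses should supply the moment control needed, and the rest of the argument is bookkeeping with the stability inequality for empirical spectral distributions under additive perturbation.
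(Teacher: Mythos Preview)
Your proposal is correct and follows essentially the same route as the paper in both regimes: for $\gamma\to\infty$, continuous mapping plus the joint-moment (characteristic-function) computation with full rank of $R,T$ supplying nondegeneracy; for $\gamma_N\to 0$, Taylor expansion into the rank-one all-ones matrix (removed via the rank/interlacing inequality), the linear term $i\theta$, and a remainder controlled by a Hoffman--Wielandt/Frobenius perturbation bound.

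The one technical difference worth noting is in how the Taylor remainder is estimated. You propose an entrywise bound $|[E_N]_{k,l}|\leq\tfrac{\gamma_N}{2}\theta_{k,l}^2$ and then worry about controlling $\tfrac{1}{N}\sum_{k,l}\theta_{k,l}^4$. The paper instead bounds the remainder directly in \emph{operator norm} via the Hadamard-product inequality $\|A_1\circ\cdots\circ A_l\|\leq\|A_1\|\cdots\|A_l\|$, obtaining $\|\text{remainder}\|\leq\gamma_N^2\exp(\|\theta\|)$; this immediately yields the needed Frobenius bound and requires only that $\|\theta\|_{\mathrm{op}}$ stay bounded (which is the paper's standing assumption, cf.\ the remark following the theorem). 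Your route also works once you observe $|\theta_{k,l}|\leq\|\theta\|_{\mathrm{op}}$ and hence $\tfrac{1}{N}\sum_{k,l}\theta_{k,l}^4\leq\|\theta\|_{\mathrm{op}}^4$, so your stated ``main obstacle'' dissolves under the same hypothesis---but the Hadamard-norm trick is the cleaner way to see it.
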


\begin{proof}
See Appendix B.
\end{proof}

Observe that in the second part of the previous theorem both $A$ and $\theta$
depend on $N$ as they are $N\times N$ matrices.

This means that the entries of the matrix $A$ become uncorrelated as $%
\gamma\rightarrow\infty$, and, by universality, the spectrum of $A$ must
behave similar to the spectrum of a standard Gaussian matrix of the same
size. Observe that in this limiting case, we arrive at the well known case
of i.i.d. entries, i.e., the canonical model of a multiantenna system \cite%
{Telatar1999}. As was mentioned before, in this situation the environment
has a high diversity in the angles of the propagation coefficients, and thus
it is natural that the system behaves as in the i.i.d. case.

On the other hand, when $\gamma \rightarrow 0$, the bulk of $AA^{\ast }$ is
close to that of $\gamma ^{2}\theta \theta ^{\ast }=\gamma ^{2}RXT^{2}XR$.
This suggests approximating $A\approx \gamma RXT$. Note that this limiting
case leads to the well known Kronecker correlation model \cite{Tulino2005}.
In the spirit of a worst case analysis, we will use $A=\gamma RXT$ in what
follows.

\begin{remark}
In the proof of Theorem \ref{Thm:ExtremalCases} we only used the fact that $%
\theta\theta^*$ has an asymptotic eigenvalue distribution with compact
support and $\left\vert \left\vert \theta\theta^*\right\vert \right\vert_{op}
$ converges a.s. as $N\to\infty$. Therefore, under these mild conditions,
the same analysis yields to the approximation $A=\gamma\theta$ for any model 
$\theta$.
\end{remark}

\subsection{Operator-valued free probability modelling}

In terms of the asymptotic behavior of the spectrum and invoking ideas from
random matrix theory and free probability, let $(\mathcal{C},\varphi)$ be a
noncommutative probability space where the algebra $\mathcal{C}$ has unit $%
\mathbf{1}_{\mathcal{C}}$ (see Appendix A). We can model the matrix $%
A=\gamma RXT$ by means of a noncommutative random variable $a$ in $\mathcal{C%
}$ such that $a=rxt$ where $r$ and $t$ are in $\mathcal{C}$ such that $r^{2}$
and $t^{2}$ are the limits in distribution of $R^{2}$ and $T^{2}$
respectively, and $x$ is a circular operator with a given variance. Since
the matrices $R^{2}$ and $T^{2}$ depend on separate sides of the
communication link, and in some contexts, such as mobile communications, the
transmitter and receiver are not in any particular orientation with respect
to each other, we can assume that the eigenmodes of this matrices are in
standard position. In particular, this means that the distributional
properties of $R$ and $T$ are invariant under random rotations, i.e., $(R,T)%
\overset{\mathrm{d}}{=}(R,UTU^{\ast})$ where $U$ is a Haar distributed
random matrix independent from $R$ and $T$. The latter implies that $r$ and $%
t$ are free \cite{Coulliet2011}, and by Voiculescu's theorem \cite{Hiai2000}
they both are free from $x$.

If we use the noncommutative random variable representation, as we did with $%
A$, for every block $H_{N}^{(i,j)}$ with $i\in\{1,\ldots,n_{R}\}$ and $%
j\in\{1,\ldots,n_{T}\}$, then 
\begin{equation*}
\left(H_{N}^{(i,j)}:i=1,\ldots,n_{R};j=1,\ldots,n_{T}\right) \overset{%
\textnormal{dist}}{\longrightarrow} \left(
r_{i,j}x_{i,j}t_{i,j}:i=1,\ldots,n_{R};j=1,\ldots,n_{T}\right) ,
\end{equation*}
where $r_{i,j}$, $t_{i,j}$ and $x_{i,j}$ are the corresponding correlation
and circular random variables for the block $H_{N}^{(i,j)}$. By the same
argument as before, we assume that the families $\{r_{i,j}:i,j\}$ and $%
\{t_{i,j}:i,j\}$ are free. In this way, for any $m\in\mathbb{N}$ 
\begin{equation*}
\lim_{N\rightarrow\infty}\mathbb{E}\left( \text{tr}_{n_{R}N}\left(
(H_{N}H_{N}^{\ast})^{m}\right) \right) =(\text{tr}_{n_{R}}\circ E)\left( (%
\mathbf{HH}^{\ast})^{m}\right) 
\end{equation*}
where 
\begin{equation*}
\mathbf{H}=\left( 
\begin{matrix}
r_{1,1}x_{1,1}t_{1,1} & \cdots & r_{1,n_{T}}x_{1,n_{T}}t_{1,n_{T}} \\ 
\vdots & \ddots & \vdots \\ 
r_{n_{R},1}x_{n_{R},1}t_{n_{R},1} & \cdots & 
r_{n_{R},n_{T}}x_{n_{R},n_{T}}t_{n_{R},n_{T}}%
\end{matrix}
\right) . 
\end{equation*}

Let $\mathrm{I}_{n_{R}}\otimes\text{tr}_{N}:\textnormal{M}_{n_{R}N}\left( 
\mathbb{C}\right) \rightarrow\textnormal{M}_{n_{R}}\left( \mathbb{C}\right) $
be the linear map determined by 
\begin{equation*}
(\mathrm{I}_{n_{R}}\otimes\text{tr}_{N})(E_{i,j}\otimes A)=\text{tr}%
_{N}\left( A\right) E_{i,j}
\end{equation*}
where $E_{i,j}$ is the $i,j$-unit matrix in $\textnormal{M}_{n_{R}}\left( 
\mathbb{C}\right) $ and $A$ is any $N\times N$ matrix. It is clear that $%
\text{tr}_{n_{R}N}=\text{tr}_{n_{R}}\circ(\mathrm{I}_{n_{R}}\otimes \text{tr}%
_{N})$. For every $N\in\mathbb{N}$, $H_{N}H_{N}^{*}$ belongs to the $%
\textnormal{M}_{n_{R}}\left( \mathbb{C}\right) $-valued probability space%
\footnote{%
Here, $\textnormal{M}_{n_{R}N}\left( \mathbb{C}\right) $ is in fact an
algebra of $n_{R}N\times n_{R}N$ random matrices over the complex numbers.
This is the only time we use this abuse of notation.} $(\textnormal{M}%
_{n_{R}N}\left( \mathbb{C}\right) ,\mathbb{E}\circ(\mathrm{I}_{n_{R}}\otimes 
\text{tr}_{N}))$ and $\mathbf{HH}^{*}$ to the $\textnormal{M}_{n_{R}}\left( 
\mathbb{C}\right) $-valued probability space $(\textnormal{M}_{n_{R}}\left( 
\mathcal{C}\right) ,E) $ where $E:=\mathrm{I}_{n_{R}}\otimes\varphi$.

Moreover, we will restrict ourselves to working with asymptotic eigenvalue
distributions with compact support, which allows us to work within the
framework of a $C^{\ast}$-probability space. In this context, convergence in
distribution implies weak convergence of the corresponding analytic
distributions \cite{Nica2006}, see also Appendix A.

In the derivation of this model, we can observe that all the $%
r_{k,1},\ldots,r_{k,n_{T}}$ depend on the new antennas around the original $k
$th receiving antenna, and thus it is reasonable to take all them equal to
some random variable $r_{k}$. Proceeding with this reasoning at the
transmitter side, we conclude that 
\begin{equation}
\mathbf{H}=\mathbf{RXT}   \label{Hmodel}
\end{equation}
where $\mathbf{R}=\textnormal{diag}\left( r_{1},\ldots,r_{n_{R}}\right) $
and $\mathbf{T}=\textnormal{diag}\left( t_{1},\ldots,t_{n_{T}}\right) $ are
the operators associated to the correlation structure of the antennas at
each side, and $\mathbf{X}=(x_{i,j})_{i,j}$. Thus, we can think of this
model as the operator-valued version of the Kronecker correlation model for
multiantenna systems.

Moreover, let $\Sigma^{2}$ be the correlation matrix\footnote{%
With respect to $E:=\mathbf{1}\otimes\varphi$.} of $\Vec{\mathbf{X}}$, i.e., 
$E\left( \Vec{\mathbf{X}}\Vec{\mathbf{X}}^{\ast}\right) =\Sigma^{2}$. In
terms of the model, $\Sigma^{2}$ must reflect the correlation structure of
the channel matrix $H$ and the parameter $\gamma$ of the system. A
reasonable way to do this is by setting $\Sigma^{2}=\gamma^{2}\mathbb{E}%
\left( \Vec{H}\Vec {H}^{\ast}\right) $. In the regime $\gamma\rightarrow0$,
the latter implies that the mutual information decreases proportionally to $%
\gamma^{2} $. Since we can incorporate the constant $\gamma$ into the
correlation operator-valued matrices $\mathbf{R}$ and $\mathbf{T}$, for
notational simplicity we will set $\gamma=1$ in our discussion, thus we will
take $\Sigma^{2}=\mathbb{E}\left( \Vec{H}\Vec{H}^{\ast}\right) $.
Nonetheless, remember that the model derivation was made in the regime $%
\gamma\rightarrow0$.

Observe that each $r_{k}$ depends on the new antennas around the original $k$%
th receiving antenna. Thus the distribution of $r_{k}$ will depend strongly
on the specific geometric distribution of the new antennas. For example, if
all the new antennas are located in exactly the same place\footnote{%
Of course this is physically impossible.} as the original antenna, we would
obtain that the distribution of $r_{k}$ must be zero. In the case where the
antennas are collinear and equally spaced, we can use some class of Toeplitz
operators as shown in \cite{Mestre2003}. A similar argument can be used for
the transmission operators.

\begin{remark}
Observe that in this way we have incorporated the finite dimensional
statistics in our operator-valued equivalent. Moreover, the correlation
matrix $\Sigma^{2}$ does not need to be separable, i.e., with a Kronecker
structure. This shows that the operator-valued Kronecker model is slightly
more flexible than the classical version: it allows an arbitrary correlation
resulting from the channel, and it also allows different correlations for
different regions of the transmitter and receiver antenna arrays, which in
our notation is encoded in the matrices $\mathbf{T}$ and $\mathbf{R}$.
\end{remark}

\section{Asymptotic Isotropic Mutual Information Analysis}

\label{Section:AsymptoticCapacity}

In this section we derive a method to calculate the asymptotic isotropic
mutual information (\ref{AsymptoticCapacity}) of our model using the tools
of operator-valued free probability. For simplicity of exposition, in what
follows we will take $n_{R}=n_{T}=n$. Note that if, for example, $n_{R}<n_{T}
$, then we can proceed by just taking $n=n_{T}$ and by taking $r_{k}$ equal
to $0$ for $k>n_{R}$. Let $\mathbf{R}$, $\mathbf{X}$ and $\mathbf{T}$ as in (%
\ref{Hmodel}). The goal is to find the distribution $F$ of $\mathbf{HH}%
^{\ast}$ in (\ref{AsymptoticCapacity}) by means of the $\textnormal{M}%
_{n}\left( \mathbb{C}\right) $-valued Cauchy transform of $\mathbf{HH}^{\ast}
$ (see Appendix A).

Using the symmetrization technique \cite{Speicher2012}, we define $\widehat{%
\mathbf{H}}$ as 
\begin{equation*}
\widehat{\mathbf{H}}=\left( 
\begin{matrix}
0 & \mathbf{H} \\ 
\mathbf{H}^{\ast} & 0%
\end{matrix}
\right) . 
\end{equation*}
Notice that the distribution of $\widehat{\mathbf{H}}^{2}$ is the same as
the distribution of $\mathbf{HH}^{\ast}$, and that $\widehat{\mathbf{H}}$ is
selfadjoint. Since all the odd moments of $\widehat{\mathbf{H}}$ are 0, the
distribution of $\widehat{\mathbf{H}}$ is symmetric.\medskip

We can then obtain the $\textnormal{M}_{2n}\left( \mathbb{C}\right) $-valued
Cauchy transform (\ref{CTOper}) of $\widehat{\mathbf{H}}^{2}$ from the
corresponding transform of $\widehat{\mathbf{H}}$ using the formula \cite%
{Nica2006} 
\begin{equation*}
G_{\widehat{\mathbf{H}}}(\zeta\mathrm{I}_{2n})=\zeta G_{\widehat{\mathbf{H}}%
^{2}}(\zeta^{2}\mathrm{I}_{2n}), 
\end{equation*}
where $\zeta\in\mathbb{C}$ and $\mathrm{I}_{2n}$ is the identity matrix in $%
\textnormal{M}_{2n}\left( \mathbb{C}\right) $. Since 
\begin{equation*}
\widehat{\mathbf{H}}=\left( 
\begin{matrix}
\mathbf{R} & 0 \\ 
0 & \mathbf{T}%
\end{matrix}
\right) \left( 
\begin{matrix}
0 & \mathbf{X} \\ 
\mathbf{X}^{\ast} & 0%
\end{matrix}
\right) \left( 
\begin{matrix}
\mathbf{R} & 0 \\ 
0 & \mathbf{T}%
\end{matrix}
\right) , 
\end{equation*}
the spectrum of $\widehat{\mathbf{H}}$ is the same as the spectrum of 
\begin{equation}
\left( 
\begin{matrix}
\mathbf{R}^{2} & 0 \\ 
0 & \mathbf{T}^{2}%
\end{matrix}
\right) \left( 
\begin{matrix}
0 & \mathbf{X} \\ 
\mathbf{X}^{\ast} & 0%
\end{matrix}
\right) =\mathbf{Q}\widehat{\mathbf{X}}\text{, say}.   \label{QC}
\end{equation}
The $\textnormal{M}_{2n}\left( \mathbb{C}\right) $-valued Cauchy transform
of $\widehat{\mathbf{X}}$ is well known (see \cite{Far2008}). Thus, we just
need to find the $\textnormal{M}_{2n}\left( \mathbb{C}\right) $-valued
Cauchy transform of $\mathbf{Q}$ in order to be able to apply the
operator-valued subordination theory \cite{Belinschi2013a,Belinschi2013b};
see (\ref{CTOperProd}) in Theorem \ref{Thm:Subbordination} of the Appendix A.

\begin{remark}
a) The above mentioned operator valued subordination theory allows us to
compute, via iterative algorithms over matrices, the distribution of sums
and products of operator valued random variables free over some algebra
(Theorem \ref{Thm:Subbordination} in Appendix A). For a rigorous exposition
of the concept of freeness over an algebra, we refer the reader to \cite%
{Far2008,Speicher2012} and the references therein. Observe that this
relation is similar to the usual freeness in free probability.

b) The Cauchy transform of $\widehat{\mathbf{X}}$ is not given explicitly,
instead, it is given as a solution of a fixed point equation \cite{Far2008}.
In general, the $\textnormal{M}_{2n}\left( \mathbb{C}\right) $-valued Cauchy
transform $G_{\widehat{\mathbf{X}}}:\textnormal{M}_{2n}\left( \mathbb{C}%
\right) \rightarrow\textnormal{M}_{2n}\left( \mathbb{C}\right) $ has to be
computed for any matrix $B\in\textnormal{M}_{2n}\left( \mathbb{C}\right) $.
However, in some cases it is enough to compute it for diagonal matrices,
which simplifies the practical implementation (see Section 4).
\end{remark}

If the correlation matrices associated to the correlation operators $%
\{r_{k}\}$ are either constant or exhibit a distribution invariant under
random rotations, as we supposed for $r$ and $t$ in the previous section,
then these correlation operators will be free among themselves. In some
applications, these correlation operators come from constant matrices since
they model the antenna array architecture which in principle is fixed.
Suppose that this is the case, and that also the $\{t_{k}\}$ are free among
themselves. In some cases this hypothesis will be unnecessary (see Section %
\ref{Section:DiagonalCase}). Observe that 
\begin{equation*}
\mathbf{Q}=\sum_{k=1}^{n}r_{k}^{2}E_{k,k}+%
\sum_{k=1}^{n}t_{k}^{2}E_{n+k,n+k}. 
\end{equation*}
By the assumed freeness relations between the random variables $%
\{r_{k},t_{k}\}_{k}$, we have that the coefficients of the operator-valued
matrices in the previous sums are free, and thus the operator-valued
matrices $\{r_{k}^{2}E_{k,k}:1\leq k\leq n\}\cup\{t_{k}^{2}E_{n+k,n+k}:1\leq
k\leq n\}$ are free over $\textnormal{M}_{2n}\left( \mathbb{C}\right) $. So
we just have to compute the $\textnormal{M}_{2n}\left( \mathbb{C}\right) $%
-valued Cauchy transform of each operator-valued matrix in the above sum,
and then apply the results from the free additive subordination theory ((\ref%
{CTOperSum}) of Theorem \ref{Thm:Subbordination} in Appendix A).

\begin{theorem}
\label{Thm:CauchyTrk2} Let $r$ be a noncommutative random variable, $n\geq1$
a fixed integer and $k\in\{1,\ldots,n\}$. For $B\in\textnormal{M}_{2n}\left( 
\mathbb{C}\right) $, 
\begin{align*}
G_{rE_{k,k}}(B)=B^{-1}+[B^{-1}]_{k,k}^{-2}\left(
G_{r}([B^{-1}]_{k,k}^{-1})-[B^{-1}]_{k,k}\right) B^{-1}E_{k,k}B^{-1}.
\end{align*}
\end{theorem}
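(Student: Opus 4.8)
The plan is to compute the $\textnormal{M}_{2n}(\mathbb{C})$-valued Cauchy transform $G_{rE_{k,k}}(B) = E\bigl((B - rE_{k,k})^{-1}\bigr)$ directly, by exploiting the fact that $rE_{k,k}$ is a rank-one perturbation in a single diagonal slot. Here the expectation $E = \mathbf{1}\otimes\varphi$ acts entrywise on matrices in $\textnormal{M}_{2n}(\mathcal{C})$ by applying $\varphi$, so the whole computation reduces to inverting $B - rE_{k,k}$ as a matrix whose entries lie in the (commutative, since generated by the single variable $r$) algebra generated by $r$, and then applying $\varphi$ entrywise. The key algebraic tool is the Sherman--Morrison / Woodbury identity: write $rE_{k,k} = r\, e_k e_k^{\top}$ where $e_k$ is the $k$th standard basis vector in $\mathbb{C}^{2n}$, so that
\begin{equation*}
(B - r e_k e_k^{\top})^{-1} = B^{-1} + \frac{r\, B^{-1} e_k e_k^{\top} B^{-1}}{1 - r\, e_k^{\top} B^{-1} e_k} = B^{-1} + \frac{r}{1 - r[B^{-1}]_{k,k}}\, B^{-1} E_{k,k} B^{-1}.
\end{equation*}

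Next I would apply $E$ to both sides. The first term gives $B^{-1}$ since $B$ is a scalar matrix. For the second term, $B^{-1}E_{k,k}B^{-1}$ is a fixed (scalar) matrix, so $E$ only needs to be applied to the scalar coefficient $\dfrac{r}{1 - r[B^{-1}]_{k,k}}$. The step is then to recognize this as a value of the scalar Cauchy transform of $r$: setting $z = [B^{-1}]_{k,k}^{-1}$, one has
\begin{equation*}
\frac{r}{1 - r[B^{-1}]_{k,k}} = \frac{r}{[B^{-1}]_{k,k}\,(z - r)} = [B^{-1}]_{k,k}^{-1}\cdot\frac{r}{z-r} = [B^{-1}]_{k,k}^{-1}\left(\frac{z}{z-r} - 1\right),
\end{equation*}
and applying $\varphi$ yields $[B^{-1}]_{k,k}^{-1}\bigl(z\, G_r(z) - 1\bigr) = [B^{-1}]_{k,k}^{-1}\bigl([B^{-1}]_{k,k}^{-1}G_r([B^{-1}]_{k,k}^{-1}) - 1\bigr) = [B^{-1}]_{k,k}^{-2}\bigl(G_r([B^{-1}]_{k,k}^{-1}) - [B^{-1}]_{k,k}\bigr)$. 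Substituting back reproduces exactly the claimed formula.

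The routine parts are the two identities above; the main thing to be careful about is justifying that the scalar manipulations are legitimate, i.e. that $1 - r[B^{-1}]_{k,k}$ is invertible in the relevant algebra (equivalently, that $[B^{-1}]_{k,k}$ lies outside the spectrum of $r$), and that $E$ really does factor through the scalar coefficient as claimed — this is where one uses that $E = \mathbf{1}\otimes\varphi$ is $\textnormal{M}_{2n}(\mathbb{C})$-bilinear, so $E(c\cdot M) = \varphi(c) M$ for $c$ in the scalar-variable algebra and $M \in \textnormal{M}_{2n}(\mathbb{C})$. For the Cauchy transform to be well-defined one takes $B$ in the appropriate domain (e.g. with $\mathrm{Im}\, B$ positive definite, or more generally $B$ such that $B - rE_{k,k}$ is invertible), and the identity then extends by analyticity; I expect the only genuine obstacle is bookkeeping the non-invertibility set of $[B^{-1}]_{k,k}$ versus the spectrum of $r$, everything else being a direct computation.
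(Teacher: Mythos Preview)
Your proposal is correct, and it takes a genuinely different route from the paper. The paper expands the Cauchy transform as a moment series $\sum_{m\geq 0} B^{-1} E\bigl((rE_{k,k}B^{-1})^m\bigr)$, uses the identity $E_{k,k}B^{-1}E_{k,k} = [B^{-1}]_{k,k}E_{k,k}$ to collapse each matrix product to $[B^{-1}]_{k,k}^{m-1}E_{k,k}B^{-1}$, and then recognizes the resulting scalar series $\sum_{m\geq 0}\varphi(r^m)[B^{-1}]_{k,k}^{m+1}$ as the moment expansion of $G_r([B^{-1}]_{k,k}^{-1})$. Your argument bypasses the series entirely by invoking Sherman--Morrison for the rank-one perturbation $B - r\,e_ke_k^{\top}$, reducing the problem in one step to evaluating $\varphi\bigl(r/(1 - r[B^{-1}]_{k,k})\bigr)$, which you then rewrite as a value of $G_r$. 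The paper's approach has the advantage of being purely algebraic at the level of formal power series (so no spectral invertibility hypothesis is needed up front; the identity is established formally and then extended by analytic continuation). Your approach is shorter and makes it structurally clear why only the single scalar entry $[B^{-1}]_{k,k}$ appears, but it requires you to justify invertibility of $1 - r[B^{-1}]_{k,k}$ directly---which, as you correctly observe, amounts to restricting $B$ to a suitable domain and is handled by the same analytic-continuation remark the paper makes at the end of its proof.
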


\begin{proof}
See Appendix C.
\end{proof}

With the previous theorem, we can compute the $\textnormal{M}_{2n}\left( 
\mathbb{C}\right) $-valued Cauchy transforms of $\{r_{k}^{2}E_{k,k},$ $%
t_{k}^{2}E_{n+k,n+k}\}$. With these transforms, we have all the elements to
compute the $\textnormal{M}_{2n}\left( \mathbb{C}\right) $-valued Cauchy
transform of $\ \widehat{\mathbf{H}}$, and in consequence the scalar Cauchy
transform of the spectrum $F$ of $\mathbf{HH}^{\ast}$ is obtained from (\ref%
{CTOpSca}): 
\begin{equation*}
G_{F}(\zeta)=\text{tr}_{n_{R}}(G_{\mathbf{HH}^{\ast}}(\zeta\mathrm{I}_{2n}))%
\mathbf{,\quad}\zeta\in\mathbb{C}\mathbf{.}
\end{equation*}
Using the Stieltjes inversion formula, one then obtains $F$ and this gives
the asymptotic isotropic mutual information (\ref{AsymptoticCapacity}).

\section{Channels with Symmetric-Like Behavior}

\label{Section:DiagonalCase}

From Section \ref{Section:Model}, we have that 
\begin{equation*}
\mathbf{X}=\left( 
\begin{matrix}
x_{1,1} & \cdots & x_{1,n} \\ 
\vdots & \ddots & \vdots \\ 
x_{n,1} & \cdots & x_{n,n}%
\end{matrix}
\right) 
\end{equation*}
is an operator-valued matrix composed of circular random variables with
correlation 
\begin{equation*}
\Sigma^{2}=\mathbb{E}\left( \Vec{H}\Vec{H}^{\ast}\right) . 
\end{equation*}
Observe that in this case, 
\begin{equation*}
\Vec{\mathbf{X}}=\Sigma\left( 
\begin{matrix}
c_{1,1} \\ 
c_{2,1} \\ 
\vdots \\ 
c_{n,n}%
\end{matrix}
\right) 
\end{equation*}
where the $c_{k,l}$ ($1\leq k,l\leq n$) are free circular random variables.
Thus there exist complex matrices $M_{k,l}$ for $1\leq k,l\leq n$ such that 
\begin{equation}
\mathbf{X}=\sum_{k,l=1}^{n}c_{k,l}M_{k,l},   \label{Eq:SumFreeOverM}
\end{equation}
i.e., $\mathbf{X}$ can be written as the sum of free circular random
variables multiplied by some complex matrices. In this way, the summands in (%
\ref{Eq:SumFreeOverM}) are free over $\textnormal{M}_{n}\left( \mathbb{C}%
\right) $. Observe that the previous procedure is exactly the same as
writing a matrix of complex Gaussian random variables as a sum of
independent complex Gaussian random variables multiplied by some complex
matrices.

For $1\leq k,l\leq n$, define 
\begin{equation*}
\widehat{\mathbf{X}}_{k,l} = \left( 
\begin{matrix}
\mathbf{0} & c_{k,l} M_{k.l} \\ 
c_{k,l}^{*}M_{k,l}^{*} & \mathbf{0}%
\end{matrix}
\right) , 
\end{equation*}
so $\widehat{\mathbf{X}}=\sum_{k,l=1}^{n} \widehat{\mathbf{X}}_{k,l}$.
Recall that the operator-valued matrices $\{\widehat{\mathbf{X}}_{k,l}\}$
are free over $\textnormal{M}_{2n}\left( \mathbb{C}\right) $. As an
alternative to the technique given in \cite{Far2008} to compute the
operator-valued Cauchy transform of $\widehat{\mathbf{X}}$, we can use the
subordination theory by computing the individual operator-valued Cauchy
transforms $G_{\widehat{\mathbf{X}}_{k,l}}$ for all $1\leq k,l\leq n$ and
then using equation (\ref{CTOperSum}). This technique is particularly neat
in the following setup.

Suppose that for all $1\leq k,l\leq n$ the operator-valued Cauchy transforms 
$G_{\widehat{\mathbf{X}}_{k,l}}$ send diagonal matrices to diagonal
matrices. From this assumption and Equations (\ref{CTOperSum}) and (\ref%
{CTOperProd}), it follows that this property is also shared by $G_{\widehat{%
\mathbf{X}}}$. Moreover, the following theorem shows that this is also true
for the operator-valued Cauchy transform of $\mathbf{Q}$.

\begin{theorem}
\label{Thm:CauchyTQ} Let $D=\textnormal{diag}\left(
d_{1},\ldots,d_{2n}\right) $ be a diagonal matrix in $\textnormal{M}%
_{2n}\left( \mathbb{C}\right) $. Then 
\begin{equation}  \label{Eq:CauchyTQ}
G_{\mathbf{Q}}(D) = \textnormal{diag}\left(
G_{r_{1}}(d_{1}),\ldots,G_{t_{n}}(d_{2n})\right) .
\end{equation}
\end{theorem}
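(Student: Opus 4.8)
The plan is to observe that $\mathbf{Q}$ is itself a diagonal element of $\textnormal{M}_{2n}(\mathcal{C})$, so that neither the freeness of the $\{r_k\}$ and $\{t_k\}$ among themselves nor any subordination machinery is needed. Recall from (\ref{QC}) that $\mathbf{Q}=\textnormal{diag}(r_1^2,\ldots,r_n^2,t_1^2,\ldots,t_n^2)$; I would write $q_k$ for its $k$-th diagonal entry (so $q_k=r_k^2$ for $k\le n$ and $q_k=t_{k-n}^2$ for $k>n$), each a self-adjoint element of the $C^{\ast}$-algebra $\mathcal{C}$, and note that in the statement $G_{r_k}$ and $G_{t_k}$ stand for the scalar Cauchy transforms of the corresponding diagonal entries $r_k^2$ and $t_k^2$ of $\mathbf{Q}$.

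First I would check that $D-\mathbf{Q}$ is invertible in $\textnormal{M}_{2n}(\mathcal{C})$ on the domain of $G_{\mathbf{Q}}$. Identifying $D=\textnormal{diag}(d_1,\ldots,d_{2n})$ with the operator-valued matrix $\textnormal{diag}(d_1\mathbf{1}_{\mathcal{C}},\ldots,d_{2n}\mathbf{1}_{\mathcal{C}})$, the difference $D-\mathbf{Q}=\textnormal{diag}(d_1\mathbf{1}_{\mathcal{C}}-q_1,\ldots,d_{2n}\mathbf{1}_{\mathcal{C}}-q_{2n})$ is again diagonal; when $\textnormal{Im}\,D$ is positive definite each $d_k$ has strictly positive imaginary part, and since $q_k=q_k^{\ast}$ the element $d_k\mathbf{1}_{\mathcal{C}}-q_k$ is invertible in $\mathcal{C}$. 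Because a diagonal matrix over any algebra is inverted entrywise, $(D-\mathbf{Q})^{-1}=\textnormal{diag}\big((d_1\mathbf{1}_{\mathcal{C}}-q_1)^{-1},\ldots,(d_{2n}\mathbf{1}_{\mathcal{C}}-q_{2n})^{-1}\big)$.

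Then I would apply the conditional expectation $E$, which acts on $\textnormal{M}_{2n}(\mathcal{C})$ by applying $\varphi$ entrywise, obtaining $G_{\mathbf{Q}}(D)=E\big((D-\mathbf{Q})^{-1}\big)=\textnormal{diag}\big(\varphi((d_1\mathbf{1}_{\mathcal{C}}-q_1)^{-1}),\ldots,\varphi((d_{2n}\mathbf{1}_{\mathcal{C}}-q_{2n})^{-1})\big)$, and recognize $\varphi((d_k\mathbf{1}_{\mathcal{C}}-q_k)^{-1})=G_{q_k}(d_k)$ directly from the definition of the scalar Cauchy transform; this is precisely (\ref{Eq:CauchyTQ}). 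Finally I would pass from $\{D:\textnormal{Im}\,D>0\}$ to the whole domain of $G_{\mathbf{Q}}$ restricted to diagonal matrices by analytic continuation, using that each $G_{q_k}$ is analytic there.

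I do not expect a genuine obstacle here: the only points needing a word of care are the invertibility/domain step, handled by the self-adjointness of the $q_k$ together with positivity of $\textnormal{Im}\,D$, and the reconciliation of conventions, namely the entrywise action of $E$ and the identification of the operator-valued Cauchy transform with the scalar ones along the diagonal. It is perhaps worth remarking, as a consistency check, that when the $\{r_k\}$ and $\{t_k\}$ are free among themselves the same identity also follows by feeding the transforms of Theorem \ref{Thm:CauchyTrk2} into the additive subordination formula (\ref{CTOperSum}) and verifying that the resulting fixed point stays diagonal; the direct argument above shows the freeness hypothesis is unnecessary here, which is the point mentioned just before Theorem \ref{Thm:CauchyTrk2}.
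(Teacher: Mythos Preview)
Your argument is correct and rests on the same key observation as the paper's proof: both $D$ and $\mathbf{Q}$ are diagonal in $\textnormal{M}_{2n}(\mathcal{C})$, so the computation decouples entrywise and no freeness among the $\{r_k,t_k\}$ is required. The difference is only in execution. The paper expands the resolvent as the moment series $G_{\mathbf{Q}}(D)=\sum_{k\geq 0}D^{-1}E\big((\mathbf{Q}D^{-1})^{k}\big)$, reads off each diagonal entry as $\sum_{k\geq 0}d_i^{-(k+1)}\varphi(q_i^{k})$, and identifies this with the scalar Cauchy transform; strictly speaking this is valid only in a neighbourhood of infinity and then extended. You instead invert $D-\mathbf{Q}$ directly on the diagonal and apply $E$ entrywise, which is cleaner and works immediately on the whole upper half-plane domain without appealing to power-series convergence. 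Your remark that $G_{r_k}$, $G_{t_k}$ in the statement denote the scalar Cauchy transforms of the diagonal entries $r_k^{2}$, $t_k^{2}$ of $\mathbf{Q}$ is also a useful clarification of the paper's notation.
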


\begin{proof}
See Appendix C.
\end{proof}

Since this diagonal invariance property is also satisfied by $\mathbf{Q}$,
again from Equations (\ref{CTOperSum}) and (\ref{CTOperProd}), we conclude
that $\widehat{\mathbf{H}}$ satisfies this property. Therefore, all the
computations involved in this case are within the framework of diagonal
matrices.

Also, in this diagonal case, any assumption of freeness between the
noncommutative random variables in $\mathbf{R}$ and $\mathbf{T}$ is
unnecessary since they do not interact when evaluating the Cauchy transform
of $\mathbf{Q}$ in diagonal matrices. Intuitively, the structure of $\mathbf{%
X}$ behaves well enough to destroy the effect that any possible dependency
between the correlation operators may have in the spectrum of $\mathbf{H}$.

It is easy to prove that the condition that $G_{\widehat{\mathbf{X}}_{k,l}}$
sends diagonal matrices to diagonal matrices is equivalent to requiring that 
\begin{equation*}
\left( 
\begin{matrix}
0 & M_{k,l} \\ 
M_{k,l}^{\ast} & 0%
\end{matrix}
\right) J\left( 
\begin{matrix}
0 & M_{k,l} \\ 
M_{k,l}^{\ast} & 0%
\end{matrix}
\right) 
\end{equation*}
is diagonal for any diagonal matrix $J\in\textnormal{M}_{2n}\left( \mathbb{C}%
\right) $. This last condition can be shown to be equivalent to requiring
that for all $1\leq k,l\leq n$, we have that $M_{k,l}=D_{k,l}P_{k,l}$ where $%
D_{,lk}$ is a diagonal matrix in $\textnormal{M}_{n}\left( \mathbb{C}\right) 
$ and $P_{k,l} $ is a permutation matrix.

\begin{remark}
If in a concrete application the correlation matrix $\mathbb{E}\left( \Vec {H%
}\Vec{H}^{\ast}\right) $ can be suitably decomposed, or approximated, in
such a way that this latter condition holds, then the method of this example
can be applied.
\end{remark}

\begin{theorem}
\label{Thm:CauchyTXkl} Let $n\geq1$. Suppose that $x$ is a circular random
variable, $D$ a diagonal matrix in $\textnormal{M}_{n}\left( \mathbb{C}%
\right) $, and $P$ a permutation matrix of the same size. Let $M:=DP$ and $%
\widehat{Mx}:=\left( 
\begin{smallmatrix}
0 & Mx \\ 
M^{\ast}x^{\ast} & 0%
\end{smallmatrix}
\right) $. Then, for $J=\left( 
\begin{smallmatrix}
J_{1} & 0 \\ 
0 & J_{2}%
\end{smallmatrix}
\right) $ with $J_{1}$ and $J_{2}$ diagonal matrices in $\textnormal{M}%
_{n}\left( \mathbb{C}\right) $, 
\begin{align}
G_{\widehat{Mx}}(J) & =\mathrm{diag}([J_{2}]_{\pi(1)}|D_{1}|^{-2}G_{xx^{\ast
}}([J_{1}]_{1}[J_{2}]_{\pi(1)}|D_{1}|^{-2}),\ldots  \notag
\label{Eq:CauchyTXkl} \\
&
\quad\quad\ldots,[J_{1}]_{\pi^{-1}(n)}|D_{\pi^{-1}(n)}|^{-2}G_{x^{%
\ast}x}([J_{1}]_{\pi^{-1}(n)}[J_{2}]_{n}|D_{\pi^{-1}(n)}|^{-2})).
\end{align}
\end{theorem}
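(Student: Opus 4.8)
The plan is to compute $G_{\widehat{Mx}}(J)=E\big((J-\widehat{Mx})^{-1}\big)$ directly by block inversion, using the factorisation $M=DP$. Write $J-\widehat{Mx}$ in $n\times n$ blocks as $\left(\begin{smallmatrix}J_{1} & -Mx\\ -M^{\ast}x^{\ast} & J_{2}\end{smallmatrix}\right)$. On the domain where the operator-valued Cauchy transform is defined, so that $J_{1},J_{2}$ and the relevant Schur complements are invertible, the block-inversion formula gives that the two diagonal blocks of $(J-\widehat{Mx})^{-1}$ are $(J_{1}-MxJ_{2}^{-1}M^{\ast}x^{\ast})^{-1}$ and $(J_{2}-M^{\ast}x^{\ast}J_{1}^{-1}Mx)^{-1}$, while each off-diagonal block is, up to scalar matrices, of the form $g(xx^{\ast})\,x$; since $x$ is circular, $\varphi\big(g(xx^{\ast})\,x\big)=0$ (every $\varphi\big((xx^{\ast})^{k}x\big)$ is an unbalanced moment), so the off-diagonal blocks of $G_{\widehat{Mx}}(J)$ vanish and it suffices to treat the two diagonal blocks.

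For the first block, since $x$ and $x^{\ast}$ commute with the scalar matrices one has $MxJ_{2}^{-1}M^{\ast}x^{\ast}=(MJ_{2}^{-1}M^{\ast})\,xx^{\ast}$, and the key observation is that $MJ_{2}^{-1}M^{\ast}=DPJ_{2}^{-1}P^{\ast}\overline{D}$ is diagonal: $PJ_{2}^{-1}P^{\ast}$ only permutes the diagonal entries of $J_{2}^{-1}$, and pre- and post-multiplication by the diagonal $D,\overline{D}$ merely rescales, so its $i$-th diagonal entry is $|D_{i}|^{2}[J_{2}]_{\pi(i)}^{-1}$, where $\pi$ is the permutation attached to $P$. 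Hence $J_{1}-MxJ_{2}^{-1}M^{\ast}x^{\ast}$ is a diagonal operator-valued matrix with $i$-th entry $[J_{1}]_{i}-|D_{i}|^{2}[J_{2}]_{\pi(i)}^{-1}xx^{\ast}$, its inverse is diagonal with $i$-th entry $\big([J_{1}]_{i}-|D_{i}|^{2}[J_{2}]_{\pi(i)}^{-1}xx^{\ast}\big)^{-1}$, and applying $E$ (which restricts to $I_{n}\otimes\varphi$ on this block) and pulling scalars out gives $[J_{2}]_{\pi(i)}|D_{i}|^{-2}\,\varphi\big(([J_{1}]_{i}[J_{2}]_{\pi(i)}|D_{i}|^{-2}-xx^{\ast})^{-1}\big)=[J_{2}]_{\pi(i)}|D_{i}|^{-2}G_{xx^{\ast}}\big([J_{1}]_{i}[J_{2}]_{\pi(i)}|D_{i}|^{-2}\big)$; for $i=1$ this is exactly the first component of (\ref{Eq:CauchyTXkl}).

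The second block is handled the same way, now using that $M^{\ast}J_{1}^{-1}M=P^{\ast}\overline{D}J_{1}^{-1}DP$ is diagonal with $i$-th entry $|D_{\pi^{-1}(i)}|^{2}[J_{1}]_{\pi^{-1}(i)}^{-1}$, which produces $x^{\ast}x$ in place of $xx^{\ast}$ and gives for the $i$-th diagonal entry of $(J_{2}-M^{\ast}x^{\ast}J_{1}^{-1}Mx)^{-1}$, after applying $E$, the value $[J_{1}]_{\pi^{-1}(i)}|D_{\pi^{-1}(i)}|^{-2}G_{x^{\ast}x}\big([J_{1}]_{\pi^{-1}(i)}[J_{2}]_{i}|D_{\pi^{-1}(i)}|^{-2}\big)$, which for $i=n$ is the last component of (\ref{Eq:CauchyTXkl}); the intermediate components come out identically. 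The only delicate points are fixing the permutation convention for $P$ so that the labels $\pi(1)$ and $\pi^{-1}(n)$ are the correct ones, the invertibility of the Schur complements on the Cauchy-transform domain, and the degenerate case in which a diagonal entry of $D$ vanishes, where the corresponding component reduces to $[J_{1}]_{i}^{-1}$, the limiting value of the stated expression as $|D_{i}|\to0$ since $G_{xx^{\ast}}(w)\sim w^{-1}$. I expect the permutation bookkeeping to be the only genuinely error-prone step; everything else is routine block linear algebra together with the definition of the scalar Cauchy transforms $G_{xx^{\ast}}$ and $G_{x^{\ast}x}$, which in any case coincide because $xx^{\ast}$ and $x^{\ast}x$ are identically distributed for a circular $x$.
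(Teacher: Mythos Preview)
Your proof is correct and reaches the same formula, but by a different route than the paper. The paper's argument in Appendix~C expands $G_{\widehat{Mx}}(J)=\sum_{n\geq0}J^{-1}E\big((\widehat{Mx}J^{-1})^{n}\big)$ as a formal power series, computes $(\widehat{Mx}J^{-1})^{2}$ explicitly to see that all even powers are block-diagonal with the same permuted-diagonal structure, discards the odd powers because the odd moments of $x$ vanish, and then resums each diagonal entry into the scalar Cauchy transform of $xx^{\ast}$ or $x^{\ast}x$. You instead go straight to the resolvent via the Schur complement, which is more direct and sidesteps the series manipulation; the vanishing of the off-diagonal blocks under $E$ (unbalanced moments of a circular element) plays exactly the role that the vanishing of odd powers plays in the paper's computation. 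Both proofs rest on the identical algebraic observation that $MJ_{2}^{-1}M^{\ast}$ and $M^{\ast}J_{1}^{-1}M$ are diagonal when $M=DP$ with $P$ a permutation, and both carry the same analytic caveat (the paper's series converges only near infinity and is extended by analytic continuation; your Schur complements need invertibility on the Cauchy-transform domain). Your block-inversion approach is arguably cleaner for this particular statement, while the paper's power-series method has the virtue of being uniform with the other Cauchy-transform computations in the same appendix.
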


\begin{proof}
See Appendix C.
\end{proof}

It is important to remark that $xx^{\ast}$ and $x^{\ast}x$ have a
Marchenko--Pastur distribution, of which the scalar Cauchy transform is
given by 
\begin{equation}
G_{xx^{\ast}}(\zeta)=\frac{\zeta-\sqrt{(\zeta-2)^{2}-4}}{2\zeta},\mathbf{%
\quad}\zeta\in\mathbb{C}.   \label{Eq:CauchyTMP}
\end{equation}

Taking $x=x_{k,l}$ and $M=M_{k,l}=D_{k,l}P_{k,l}$, we obtain the $%
\textnormal{M}_{2n}\left( \mathbb{C}\right) $-valued Cauchy transform of $%
\widehat{\mathbf{X}}_{k,l}$ explicitly. Given the scalar Cauchy transforms
of the variables $\{r_{k},t_{k}\}$, the corresponding operator-valued
transform of $\mathbf{Q}$ is also explicit, as given in Equation (\ref%
{Eq:CauchyTQ}). Nonetheless, the operator-valued Cauchy transform of $%
\widehat{\mathbf{X}}$ and $\mathbf{Q}\widehat{\mathbf{X}}$ are not given
explicitly, and need to be computed by means of Equations (\ref{CTOperSum})
and (\ref{CTOperProd}), respectively.

\subsection{Example}

Suppose that we have an operator-valued equivalent given by 
\begin{equation*}
\mathbf{H}=\left( 
\begin{matrix}
r_{1} & 0 \\ 
0 & r_{2}%
\end{matrix}
\right) \left( 
\begin{matrix}
x_{1} & x_{2} \\ 
x_{2} & x_{1}%
\end{matrix}
\right) \left( 
\begin{matrix}
t_{1} & 0 \\ 
0 & t_{2}%
\end{matrix}
\right) 
\end{equation*}
which corresponds to a channel with symmetric behavior. Let $\widehat{%
\mathbf{X}}_{1}$ and $\widehat{\mathbf{X}}_{2}$ be defined as follows 
\begin{equation*}
\widehat{\mathbf{X}}_{1}=x_{1}\left( 
\begin{matrix}
0 & 0 & x_{1} & 0 \\ 
0 & 0 & 0 & x_{1} \\ 
x_{1}^{*} & 0 & 0 & 0 \\ 
0 & x_{1}^{*} & 0 & 0%
\end{matrix}
\right) ;\ \ \ \widehat{\mathbf{X}}_{2}=x_{2}\left( 
\begin{matrix}
0 & 0 & 0 & x_{2} \\ 
0 & 0 & x_{2} & 0 \\ 
0 & x_{2}^{*} & 0 & 0 \\ 
x_{2}^{*} & 0 & 0 & 0%
\end{matrix}
\right) . 
\end{equation*}
In the notation of (\ref{QC}), $\widehat{\mathbf{X}}=\widehat{\mathbf{X}}%
_{1}+\widehat{\mathbf{X}}_{2}$. Moreover, using the same notation as above, $%
M_{1}=P_{1}=D_{1}=\mathrm{I}_{2}$, $M_{2}=P_{2}=\left( 
\begin{smallmatrix}
0 & 1 \\ 
1 & 0%
\end{smallmatrix}
\right) $ and $D_{2}=\mathrm{I}_{2}$. By Equation (\ref{Eq:CauchyTXkl}), the 
$\textnormal{M}_{4}\left( \mathbb{C}\right) $-valued Cauchy transforms of $%
\widehat{\mathbf{X}}_{1}$ and $\widehat{\mathbf{X}}_{1}$ are given, for $D=%
\textnormal{diag}\left( d_{1},d_{2},d_{3},d_{4}\right) $, by\footnote{%
Here we take the generic notation $xx^{*}$ to denote that $G_{xx^{*}}$ is
the scalar Cauchy transform in Equation (\ref{Eq:CauchyTMP}).} 
\begin{align*}
G_{\widehat{\mathbf{X}}_{1}}(D) & =\textnormal{diag}\left(
d_{3}G_{xx^{*}}(d_{1}d_{3}),d_{4}G_{xx^{*}}(d_{2}d_{4}),d_{1}G_{xx^{*}}(d_{1}d_{3}),d_{2}G_{xx^{*}}(d_{2}d_{4})\right)
\\
G_{\widehat{\mathbf{X}}_{2}}(D) & =\textnormal{diag}\left(
d_{4}G_{xx^{*}}(d_{1}d_{4}),d_{3}G_{xx^{*}}(d_{2}d_{3}),d_{2}G_{xx^{*}}(d_{3}d_{2}),d_{1}G_{xx^{*}}(d_{4}d_{1})\right)
\end{align*}
respectively.

Figure \ref{fig:Histogram} shows the asymptotic spectrum of $\mathbf{HH}%
^{\ast}$ against the corresponding matrix of size $1000\times1000$ when the
correlations $\{r_{k}^{2},t_{k}^{2}\}$ are assumed to obey the uniform
distribution on $[0,1]$. The figure shows good agreement.

\begin{figure}[th]
\centering
\includegraphics[width=0.5\textwidth]{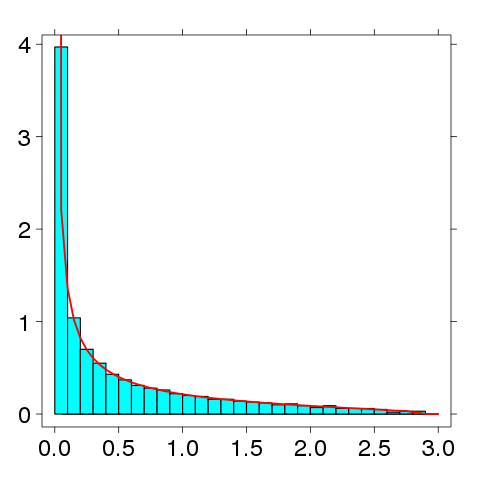} 
\caption{Histograms of the eigenvalues against the computed density.}
\label{fig:Histogram}
\end{figure}

\begin{remark}
Other symmetric-like channels can also be solved using the above approach,
for example 
\begin{equation*}
\mathbf{X}=\left( 
\begin{matrix}
x_{1} & x_{2} & x_{3} \\ 
x_{3} & x_{1} & x_{2} \\ 
x_{2} & x_{3} & x_{1}%
\end{matrix}
\right) ,\quad\quad\quad\mathbf{X}=\left( 
\begin{matrix}
x_{1} & x_{4} & x_{5} \\ 
x_{4} & x_{2} & x_{6} \\ 
x_{5} & x_{6} & x_{3}%
\end{matrix}
\right) . 
\end{equation*}
Observe that neither the matrix computed in this example nor the above
matrices have a separable correlation matrix.
\end{remark}

\section{Comparison With Other Models}

\label{Section:NumericalComparison}

In order to compare the operator-valued Kronecker model with some of the
classical models, in this section we compute the isotropic mutual
information of a $2\times2$ multiantenna system with Kronecker correlation
given by 
\begin{equation*}
K:=\frac{1}{8}\left( 
\begin{matrix}
1 & 0 \\ 
0 & 3%
\end{matrix}
\right) \otimes\left( 
\begin{matrix}
3 & 0 \\ 
0 & 5%
\end{matrix}
\right) , 
\end{equation*}
the asymptotic isotropic mutual information predicted by the usual Kronecker
correlation model, and the corresponding quantity based on the
operator-valued model. For such a channel, one possibility for implementing
the classical Kronecker correlation model is to take three noncommutative
random variables $r$, $x$ and $t$ such that $x$ is circular and the
distributions of $r^{2}$ and $t^{2} $ are given by 
\begin{align*}
\mu_{r^{2}} & =\frac{1}{2}\delta_{\frac{1}{2}}+\frac{1}{2}\delta_{\frac{3}{2}%
}, \\
\mu_{t^{2}} & =\frac{1}{2}\delta_{\frac{3}{4}}+\frac{1}{2}\delta_{\frac{5}{4}%
}.
\end{align*}
From this it is clear that we may compute the asymptotic isotropic mutual
information of the classical Kronecker model within the framework of the
operator-valued Kronecker model. In particular, the classical Kronecker
correlation model corresponds to the $n=1$ operator-valued Kronecker model.
This shows that the operator-valued Kronecker model is a generalization of
the usual Kronecker model also from this operational point of view.

The operator-valued Kronecker model uses $\Sigma^{2}=K$, but we have to use
a model for the correlation produced by the asymptotic antenna patterns.
Here we use two types of antenna pattern correlations. In one case we assume
that the distribution of the correlation operators $\{r_{k},t_{k}\}$ take 1
with probability one, i.e., there is no correlation due to the antenna
patterns; in the second case we assume that their distribution is given by 
\begin{equation}
\mu=\frac{18}{38}\delta_{1}+\frac{12}{38}\delta_{\frac{1}{2}}+\frac{8}{38}%
\delta_{\frac{1}{4}}.
\end{equation}
This distribution is motivated by an exponential decay law. In both cases we
set $\gamma=1$.

Figure \ref{fig:SNR} shows the mutual information of each model. The mutual
information of the $2\times2$ system was computed using a Monte Carlo
simulation. From this figure, we observe that the highest mutual information
is produced by the $2\times2 $ system. This is caused by the tail of the
eigenvalue distribution of the $2\times2$ random matrix involved. It is also
important to notice that the operator-valued model predicts more mutual
information than the usual Kronecker model when we assume no antenna pattern
correlations. However, in the presence of antenna pattern correlations, the
mutual information predicted by the operator-valued Kronecker model goes
below the one predicted by the classical Kronecker model. In particular,
this shows that the impact of the antenna design may be more significant
than the impact of the propagation environment itself.

\begin{figure}[th]
\centering
\includegraphics[width=0.6\textwidth]{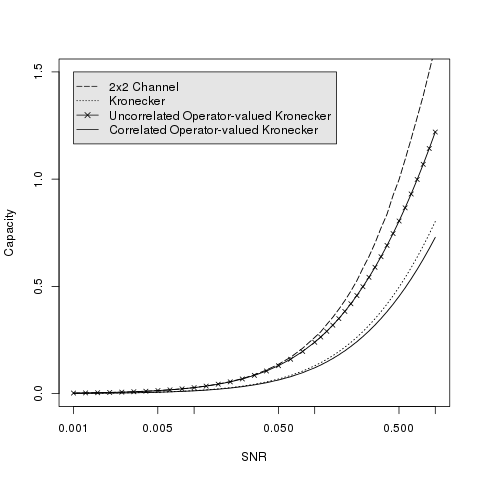} 
\caption{Isotropic mutual information predicted by the different models with
respect to $P$.}
\label{fig:SNR}
\end{figure}

\begin{remark}
Observe that in this example the correlation satisfies the hypothesis of the
previous section. In particular, 
\begin{align*}
\scriptstyle \left( 
\begin{matrix}
\sqrt{\frac{1}{2}} & 0 \\ 
0 & \sqrt{\frac{3}{2}}%
\end{matrix}
\right) \left( 
\begin{matrix}
x_{1} & x_{2} \\ 
x_{3} & x_{4}%
\end{matrix}
\right) \left( 
\begin{matrix}
\sqrt{\frac{3}{4}} & 0 \\ 
0 & \sqrt{\frac{5}{4}}%
\end{matrix}
\right) = \left( 
\begin{matrix}
\sqrt{\frac{3}{8}}x_1 & 0 \\ 
0 & 0%
\end{matrix}
\right) + \left( 
\begin{matrix}
0 & \sqrt{\frac{5}{8}}x_2 \\ 
0 & 0%
\end{matrix}
\right)+ \left( 
\begin{matrix}
0 & 0 \\ 
\sqrt{\frac{9}{8}}x_3 & 0%
\end{matrix}
\right) + \left( 
\begin{matrix}
0 & 0 \\ 
0 & \sqrt{\frac{15}{8}}x_4%
\end{matrix}
\right) .
\end{align*}
This shows that the operator-valued Kronecker model may be used for some
specific separable correlation channels.
\end{remark}

\appendix

\section{Prerequisites}

\subsection{Notation}

\begin{description}
\item $\mathbb{N}$: the set of natural numbers;

\item $\textnormal{M}_{n\times m}\left( \mathcal{C}\right) $: the set of all 
$n\times m$ matrices with entries from the algebra $\mathcal{C}$;

\item $A_{i,j}$ or $[A]_{i,j}$: the $i,j$th entry of the matrix $A$;

\item $A^{\top}$ the transpose of the matrix $A$, and $A^{\ast}$, its
conjugate transpose;

\item $E_{i,j}$: the $i,j$-unit matrix in $\textnormal{M}_{n\times m}\left( 
\mathbb{C}\right) $;

\item $\mathrm{I}_{n}$: the identity matrix in $\textnormal{M}_{n}\left( 
\mathbb{C}\right) $;

\item $\mathbb{E}$: $\ $expected valued with respect to a classical
probability space $(\Omega,\mathcal{F},\mathbb{P})$;
\end{description}

\subsection{Operator-Valued Free Probability Background}

In what follows, $\mathcal{C}$ will denote a noncommutative unital $C^{\ast} 
$-algebra with unit $\mathbf{1}_{\mathcal{C}}$, and $\varphi:\mathcal{C}%
\rightarrow\mathbb{C}$ is a unit-preserving positive linear functional,
i.e., $\varphi\left( \mathbf{1}_{\mathcal{C}}\right) =1$ and $\varphi\left(
aa^{\ast}\right) \geq0$ for any $a\in\mathcal{C}$. The pair $(\mathcal{C}%
,\varphi)$ is called a noncommutative probability space and the elements of $%
\mathcal{C}$ are called noncommutative random variables. Unless otherwise
stated, we use Greek letters to denote scalar numbers, lower case letters
for noncommutative random variables in $\mathcal{C}$, upper case letters for
matrices or random matrices in $\textnormal{M}_{n}\left( \mathbb{C}\right) $%
, and upper case bold letters for matrices in $\textnormal{M}_{n}\left( 
\mathcal{C}\right) $. The latter are called operator-valued matrices and $(%
\textnormal{M}_{n}\left( \mathcal{C}\right) ,\text{tr}_{n}\otimes\varphi)$
is a noncommutative probability space \cite{Speicher2012}.

Given a selfadjoint element $a\in\mathcal{C}$, its algebraic distribution is
the collection of its moments, i.e., $(\varphi\left( a^{k}\right) )_{k\geq 1}
$. Let $(\mathcal{A},\varphi)$ and $(\mathcal{A}_{n},\varphi_{n})$ for $%
n\geq1$ be noncommutative probability spaces. If $a\in\mathcal{A}$ and $%
a_{n}\in\mathcal{A}_{n}$ for $n\geq1$ are selfadjoint elements, we say that $%
(a_{n})_{n\geq1}$ converges in distribution to $a$ as $n\rightarrow\infty$
if the corresponding moments converge, i.e., 
\begin{equation*}
\lim_{n\rightarrow\infty}\varphi_{n}(a_{n}^{m})=\varphi\left( a^{m}\right) 
\end{equation*}
for all $m\in\mathbb{N}$. If there is a probability measure $\mu$ in $%
\mathbb{C}$ with compact support such that for all $m\in\mathbb{N}$ 
\begin{equation*}
\varphi\left( a^{m}\right) =\int_{\mathbb{C}}\zeta^{m}\,\mu(\mathrm{d}%
\zeta), 
\end{equation*}
we call $\mu$ the analytical distribution of $a$. A family $a_{1},\ldots
,a_{n}\in\mathcal{A}$ of noncommutative random variables is said to be 
\textit{free} if 
\begin{equation*}
\varphi\left( \lbrack p_{1}(a_{i_{1}})-\varphi\left( p_{1}(a_{i_{1}})\right)
]\cdots\lbrack p_{k}(a_{i_{k}})-\varphi\left( p_{k}(a_{i_{k}})\right)
]\right) =0 
\end{equation*}
for all $k\in\mathbb{N}$, polynomials $p_{1},\ldots,p_{k}$ and $%
i_{1},\ldots,i_{k}\in\{1,\ldots,n\}$ such that $i_{l}\neq i_{l+1}$ for $%
1\leq l\leq k-1$. Let $A_{n}$ and $B_{n}$ be random matrices in $\textnormal{%
M}_{n}\left( \mathbb{C}\right) $ for every $n\geq1$. If there exists $a,b\in%
\mathcal{C}$ such that $a$ and $b$ are free and $(A_{n},B_{n})$ converge in
distribution to $(a,b)$, i.e., 
\begin{equation*}
\lim_{n\rightarrow\infty}\frac{1}{n}\text{tr}\left(
A_{n}^{l_{1}}B_{n}^{m_{1}}\cdots A_{n}^{l_{k}}B_{n}^{m_{k}}\right)
=\varphi\left( a^{l_{1}}b^{m_{1}}\cdots a^{l_{k}}b^{m_{k}}\right) 
\end{equation*}
for all $k,l_{1},\ldots,l_{k},m_{1},\ldots,m_{k}\in\mathbb{N}$, we say that $%
A_{n}$ and $B_{n}$ are asymptotically free.

Given a probability measure $\mu$ in $\mathbb{R}$, its (scalar) Cauchy
transform $G_{\mu}:\mathbb{C}^{+}\rightarrow\mathbb{C}^{-}$ is defined as 
\begin{equation*}
G_{\mu}(\zeta):=\int_{\mathbb{R}}\frac{\mu(\text{d}\xi)}{\zeta-\xi}. 
\end{equation*}
The Stieltjes inversion formula states that if $\mu$ has density $f:\mathbb{R%
}\rightarrow\mathbb{R}$ then 
\begin{equation*}
f(\xi)=-\frac{1}{\pi}\lim_{%
\begin{smallmatrix}
\zeta\in\mathbb{R} \\ 
\zeta\rightarrow0+%
\end{smallmatrix}
}\Im(G_{\mu}(\xi+i\zeta)) 
\end{equation*}
for all $\xi\in\mathbb{R}$, where $\Im$ denotes the imaginary part and $\Re$
the real part.

Let $\mathcal{H}^{+}(\textnormal{M}_{n}\left( \mathbb{C}\right) )\subset 
\textnormal{M}_{n}\left( \mathbb{C}\right) $ denote the set of matrices $B$
such that $\Im(B):=\dfrac{B-B^{\ast}}{2i}$ is positive definite, and define $%
\mathcal{H}^{-}(\textnormal{M}_{n}\left( \mathbb{C}\right) ):=-\mathcal{H}%
^{+}(\textnormal{M}_{n}\left( \mathbb{C}\right) )$. For an operator-valued
matrix $\mathbf{X}\in\textnormal{M}_{n}\left( \mathcal{C}\right) $ we define
its $\textnormal{M}_{n}\left( \mathbb{C}\right) $-valued Cauchy transform $%
G_{\mathbf{X}}:\mathcal{H}^{+}(\textnormal{M}_{n}\left( \mathbb{C}\right)
)\rightarrow\mathcal{H}^{-}(\textnormal{M}_{n}\left( \mathbb{C}\right) )$ by 
\begin{align}
G_{\mathbf{X}}(B) & =E\left( (B-\mathbf{X})^{-1}\right)  \label{CTOper} \\
& =\sum_{n\geq0}B^{-1}E\left( (\mathbf{X}B^{-1})^{n}\right) ,  \notag
\end{align}
where the last power series converges in a neighborhood of infinity. The
scalar Cauchy transform of $\mathbf{X}$ is given by 
\begin{equation}
G(\zeta)=\text{tr}_{n} (G_{\mathbf{X}}(\zeta\mathrm{I}_{n}))\mathbf{,\quad }%
\zeta\in\mathbb{C}\mathbf{.}   \label{CTOpSca}
\end{equation}

The freeness relation over $\textnormal{M}_{n}\left( \mathbb{C}\right) $ is
defined similarly to the usual freeness, but taking $E$ instead of $\varphi$
and non-commutative polynomials over $\textnormal{M}_{n}\left( \mathbb{C}%
\right) $ instead of complex polynomials. The main tools that we use from
the subordination theory are the following formulas to compute the $%
\textnormal{M}_{n}\left( \mathbb{C}\right) $-valued Cauchy transforms of
sums and products of free elements in $\textnormal{M}_{n}\left( \mathcal{C}%
\right) $; see \cite{Belinschi2013a,Belinschi2013b}.

If $\mathbf{X}=\mathbf{X}^{\ast}$ is an operator-valued matrix in $%
\textnormal{M}_{n}\left( \mathcal{C}\right) $, we define the $r_{\mathbf{X}}$
and $h_{\mathbf{X}}$ transforms, for $B\in\mathcal{H}^{+}(\textnormal{M}%
_{n}\left( \mathbb{C}\right) )$, by 
\begin{align*}
r_{\mathbf{X}}(B) & =G_{\mathbf{X}}(B)^{-1}-B, \\
h_{\mathbf{X}}(B) & =B^{-1}-G_{\mathbf{X}}(B^{-1})^{-1}.
\end{align*}

\begin{theorem}
\label{Thm:Subbordination} Let $\mathbf{X},\mathbf{Y}\in\textnormal{M}%
_{n}\left( \mathcal{C}\right) $ be selfadjoint elements free over $%
\textnormal{M}_{n}\left( \mathcal{C}\right) $.

i) For all $B\in\mathcal{H}^{+}(\textnormal{M}_{n}\left( \mathbb{C}\right) ) 
$, we have that 
\begin{equation}
G_{\mathbf{X}+\mathbf{Y}}{B}=G_{\mathbf{X}}(\omega_{1}(B)), 
\label{CTOperSum}
\end{equation}
where $\omega_{1}(B)=\lim_{n\rightarrow\infty}f_{B}^{n}(W)$ for any $W\in%
\mathcal{H}^{+}(\textnormal{M}_{n}\left( \mathcal{C}\right) )$ and 
\begin{equation*}
f_{b}(W)=r_{\mathbf{Y}}(r_{\mathbf{X}}(W)+B)+B. 
\end{equation*}

ii) In addition, if $\mathbf{X}$ is positive definite, $E\left( \mathbf{X}%
\right) $ and $E\left( \mathbf{Y}\right) $ invertible, and we define for all 
$B\in\mathcal{H}^{+}(\textnormal{M}_{n}\left( \mathbb{C}\right) )$ with $%
\Im(B\mathbf{X})>0$ the function $g_{B}(W)=Bh_{\mathbf{X}}(h_{\mathbf{Y}%
}(W)B)$ for all $W\in\mathcal{H}^{+}(\textnormal{M}_{n}\left( \mathbb{C}%
\right) )$, then there exists a function $\omega_{2}$ such that 
\begin{equation*}
\omega_{2}(B)=\lim_{n\rightarrow\infty}g_{B}^{n}(W) 
\end{equation*}
for all $W\in\mathcal{H}^{+}(\textnormal{M}_{n}\left( \mathbb{C}\right) )$,
and 
\begin{align}
G_{\mathbf{XY}}(z\mathrm{I}_{n}) & =(z\mathrm{I}_{n}-h_{\mathbf{XY}}(z^{-1}%
\mathrm{I}_{n}))^{-1},  \label{CTOperProd} \\
zh_{\mathbf{XY}}(z\mathrm{I}_{n}) & =\omega_{2}(z\mathrm{I}_{n})h_{\mathbf{Y}%
}(\omega_{2}(z\mathrm{I}_{n}))).  \notag
\end{align}
\end{theorem}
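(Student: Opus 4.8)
The statement is the operator-valued analytic subordination theorem of Belinschi, Mai and Speicher \cite{Belinschi2013a,Belinschi2013b}, specialized to $\textnormal{M}_{n}\left( \mathbb{C}\right) $ in place of a general base algebra, so the plan is to reproduce their argument in this setting. The engine is a fixed-point analysis for holomorphic self-maps of the operator half-plane $\mathcal{H}^{+}(\textnormal{M}_{n}\left( \mathbb{C}\right) )$, combined with the linearization of freeness through the operator-valued $R$-transform; the maps $f_{B}$ and $g_{B}$ appearing in the statement are exactly the ones whose attracting fixed points will turn out to be the subordination functions $\omega_{1}$ and $\omega_{2}$.

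For part i) I would begin from the standard analytic properties of the $\textnormal{M}_{n}\left( \mathbb{C}\right) $-valued Cauchy transform: $G_{\mathbf{X}}$ is analytic from $\mathcal{H}^{+}(\textnormal{M}_{n}\left( \mathbb{C}\right) )$ into $\mathcal{H}^{-}(\textnormal{M}_{n}\left( \mathbb{C}\right) )$; the reciprocal $F_{\mathbf{X}}(B):=G_{\mathbf{X}}(B)^{-1}$ maps $\mathcal{H}^{+}$ into itself with $\Im(F_{\mathbf{X}}(B))\geq\Im(B)$; and $F_{\mathbf{X}}$ has a Nevanlinna representation which, because $\mathbf{X}$ is bounded, yields for every $\varepsilon>0$ a uniform bound $\Vert r_{\mathbf{X}}(B)\Vert\leq c_{\mathbf{X}}(\varepsilon)$ whenever $\Im(B)\geq\varepsilon\mathrm{I}_{n}$. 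The imaginary-part inequality says precisely that $\Im(r_{\mathbf{X}}(B))\geq0$ on $\mathcal{H}^{+}$. Fix $B\in\mathcal{H}^{+}$ with, say, $\Im(B)\geq\varepsilon\mathrm{I}_{n}$. Then for any $W\in\mathcal{H}^{+}$ the element $r_{\mathbf{X}}(W)+B$ has imaginary part $\geq\varepsilon\mathrm{I}_{n}$, so $\Vert r_{\mathbf{Y}}(r_{\mathbf{X}}(W)+B)\Vert\leq c_{\mathbf{Y}}(\varepsilon)$, and hence $f_{B}(W)=r_{\mathbf{Y}}(r_{\mathbf{X}}(W)+B)+B$ satisfies $\Im(f_{B}(W))\geq\varepsilon\mathrm{I}_{n}$ and $\Vert f_{B}(W)\Vert\leq c_{\mathbf{Y}}(\varepsilon)+\Vert B\Vert$, both uniformly in $W$. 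Thus $f_{B}$ is a holomorphic self-map of $\mathcal{H}^{+}$ whose image is bounded and bounded away from $\partial\mathcal{H}^{+}$. Conjugating by a Cayley transform, which carries $\mathcal{H}^{+}(\textnormal{M}_{n}\left( \mathbb{C}\right) )$ biholomorphically onto the bounded convex operator unit ball, turns $f_{B}$ into a holomorphic self-map of that ball with image strictly interior, so the Earle--Hamilton fixed-point theorem provides a unique fixed point $\omega_{1}(B)$, depending analytically on $B$, toward which every orbit $f_{B}^{\circ k}(W)$ converges. This is the convergence assertion $\omega_{1}(B)=\lim_{k\to\infty}f_{B}^{\circ k}(W)$.

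It remains to identify $\omega_{1}(B)$ with the subordination function of $\mathbf{X}+\mathbf{Y}$. Put $\omega_{2}(B):=r_{\mathbf{X}}(\omega_{1}(B))+B$; the fixed-point relation $\omega_{1}=f_{B}(\omega_{1})$ then reads $\omega_{1}(B)=r_{\mathbf{Y}}(\omega_{2}(B))+B$, and unwinding the definition of $r$ gives $G_{\mathbf{X}}(\omega_{1}(B))^{-1}=G_{\mathbf{Y}}(\omega_{2}(B))^{-1}=\omega_{1}(B)+\omega_{2}(B)-B$; write $G$ for this common value. Near $B=\infty$, where $G_{\mathbf{X}}$ and $G_{\mathbf{Y}}$ are compositionally invertible, the equalities $G_{\mathbf{X}}(\omega_{1}(B))=G_{\mathbf{Y}}(\omega_{2}(B))=G$ rewrite, in terms of the operator-valued $R$-transforms $R_{\mathbf{X}},R_{\mathbf{Y}}$ (so that $C\mapsto R_{\mathbf{X}}(C)+C^{-1}$ undoes $G_{\mathbf{X}}$ near the origin), as $\omega_{1}(B)=R_{\mathbf{X}}(G)+G^{-1}$ and $\omega_{2}(B)=R_{\mathbf{Y}}(G)+G^{-1}$; adding these and using $G^{-1}=\omega_{1}(B)+\omega_{2}(B)-B$ gives $R_{\mathbf{X}}(G)+R_{\mathbf{Y}}(G)+G^{-1}=B$. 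Since $\mathbf{X}$ and $\mathbf{Y}$ are free over $\textnormal{M}_{n}\left( \mathbb{C}\right) $, the defining linearization property of freeness gives $R_{\mathbf{X}}+R_{\mathbf{Y}}=R_{\mathbf{X}+\mathbf{Y}}$, so the last identity reads $R_{\mathbf{X}+\mathbf{Y}}(G)+G^{-1}=B$; the left-hand side is the compositional inverse of $G_{\mathbf{X}+\mathbf{Y}}$ evaluated at $G$, whence $G=G_{\mathbf{X}+\mathbf{Y}}(B)$ near infinity, and, both sides being analytic on $\mathcal{H}^{+}$, the identity $G_{\mathbf{X}}(\omega_{1}(B))=G_{\mathbf{X}+\mathbf{Y}}(B)$ propagates to all of $\mathcal{H}^{+}$, which is \eqref{CTOperSum}.

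Part ii) follows the same scheme with the multiplicative transforms $h_{\mathbf{X}}$ and the map $g_{B}(W)=Bh_{\mathbf{X}}(h_{\mathbf{Y}}(W)B)$, using $\mathbf{X}>0$, invertibility of $E(\mathbf{X})$ and $E(\mathbf{Y})$, and the domain $\{B:\Im(B\mathbf{X})>0\}$ specified in the statement; on that domain $\mathbf{XY}$, though not self-adjoint, has the same $\textnormal{M}_{n}\left( \mathbb{C}\right) $-valued distribution as the positive element $\mathbf{X}^{1/2}\mathbf{Y}\mathbf{X}^{1/2}$, which keeps $G_{\mathbf{XY}}$ well behaved. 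One checks, using the analytic properties of $h$ (again a Nevanlinna-type bound together with the positivity hypothesis), that $g_{B}$ restricts to a holomorphic self-map with image strictly interior on a suitable bounded biholomorphic model of that domain, applies Earle--Hamilton to obtain the attracting fixed point $\omega_{2}$, and unwinds the fixed-point relation into the multiplicative subordination identity $zh_{\mathbf{XY}}(z\mathrm{I}_{n})=\omega_{2}(z\mathrm{I}_{n})h_{\mathbf{Y}}(\omega_{2}(z\mathrm{I}_{n}))$---the exact analog of the additive computation above, now for free multiplicative convolution---while the companion formula $G_{\mathbf{XY}}(z\mathrm{I}_{n})=(z\mathrm{I}_{n}-h_{\mathbf{XY}}(z^{-1}\mathrm{I}_{n}))^{-1}$ is merely a rearrangement of the definition of $h_{\mathbf{XY}}$. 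The main obstacle throughout is the ``image strictly inside'' hypothesis of Earle--Hamilton: one needs the imaginary-part lower bound on $f_{B}(W)$ (respectively the appropriate positivity of $g_{B}(W)$) and a uniform norm bound on the image \emph{simultaneously}, since only their conjunction, transported to the bounded model, confines the orbit to a region at positive distance from the boundary. The multiplicative case is the more delicate one, because its natural domain is not a half-plane and the non-self-adjointness of $\mathbf{XY}$ must be routed through $\mathbf{X}^{1/2}\mathbf{Y}\mathbf{X}^{1/2}$ at every estimate.
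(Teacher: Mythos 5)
The paper does not prove this theorem. It is stated in Appendix~A under ``Operator-Valued Free Probability Background'' and imported as a known prerequisite from \cite{Belinschi2013a,Belinschi2013b} (``The main tools that we use from the subordination theory are the following formulas \ldots; see \cite{Belinschi2013a,Belinschi2013b}''). There is therefore no in-paper argument to compare your attempt against: you have supplied a proof for a result the paper simply cites.

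That said, your sketch of the external Belinschi--Mai--Speicher argument is, at the level of an outline, the right one for the additive case: establish that $F_{\mathbf{X}}(B)=G_{\mathbf{X}}(B)^{-1}$ is a Pick map with Nevanlinna representation, deduce that $f_{B}$ is a holomorphic self-map of $\mathcal{H}^{+}(\textnormal{M}_n(\mathbb{C}))$ whose image is bounded and bounded away from the boundary, invoke Earle--Hamilton for the attracting fixed point $\omega_{1}(B)$, and then unwind the fixed-point equation through the $R$-transform linearization $R_{\mathbf{X}+\mathbf{Y}}=R_{\mathbf{X}}+R_{\mathbf{Y}}$ to identify $G_{\mathbf{X}}(\omega_{1}(B))$ with $G_{\mathbf{X}+\mathbf{Y}}(B)$ near infinity and then by analytic continuation on all of $\mathcal{H}^{+}$. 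Two cautions about the multiplicative sketch. First, the claim that $\mathbf{XY}$ has the same $\textnormal{M}_n(\mathbb{C})$-valued distribution as $\mathbf{X}^{1/2}\mathbf{Y}\mathbf{X}^{1/2}$ is not literally correct: $E=\mathrm{I}_n\otimes\varphi$ is not a trace on the matrix part, so conjugation by $\mathbf{X}^{1/2}$ does not preserve $E$-moments in general. What does hold, and what the multiplicative formula in \eqref{CTOperProd} actually exploits, is that $\mathrm{tr}_n\circ E$ is a trace, so the scalar moments agree, and the formulas are stated only at scalar arguments $z\mathrm{I}_n$. Second, the domain $\{B:\Im(B\mathbf{X})>0\}$ on which $g_{B}$ acts is not a matrix half-plane, so the Cayley-transform reduction to a bounded ball that you use in part~i) is not directly available; the ``image strictly inside'' hypothesis of Earle--Hamilton needs a separate and more delicate positivity estimate, which is precisely where the multiplicative paper \cite{Belinschi2013b} puts most of its effort. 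Since the paper itself treats Theorem~\ref{Thm:Subbordination} as a black box, none of this is a flaw relative to the paper; it is only a caveat about your sketch of the cited proof.
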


The functions above are defined in $\mathcal{H}^{+}(\textnormal{M}_{n}\left( 
\mathbb{C}\right) )$. Whenever we evaluate any of these functions in $B\in%
\mathcal{H}^{-}(\textnormal{M}_{n}\left( \mathbb{C}\right) )$ we have to do
so by means of the relation $f(B)=f(B^{\ast})^{\ast}$.

\section{Proof of Theorem \protect\ref{Thm:ExtremalCases} and Further
Analysis}

\subsection{Case $\protect\gamma\to\infty$}

It is a well known result \cite{Tao2012} that the eigenvalues are continuous
functions of the entries of a selfadjoint matrix. If the entries of a matrix 
$M$ lie in the unit circle, then its Frobenius norm is bounded and so its
operator norm. In particular, $g(M):=(\lambda_1(MM^*),\ldots,\lambda_N(MM^*))
$ is a bounded and continuous function of the entries of $M$. Therefore, if
we prove that the entries of $A$ converge in distribution to the entries of $%
U$, i.e. $(A_{i,j})_{i,j=1}^N \overset{\textnormal{d}}{\to}
(U_{i,j})_{i,j=1}^N$, then $g(A)\overset{\textnormal{d}}{\to} g(U)$ as
required.

The entries of $A$ and $U$ lie in the unit circle, so we are dealing with
compact support distributions. Thus, it is enough to show the convergence of
the joint moments of the entries of $A$ to those of $U$ to ensure the
multivariate convergence in distribution, and so the claimed convergence in
the first part of Theorem \ref{Thm:ExtremalCases}.

Let $N\in\mathbb{N}$ be fixed, for $(n_{k,l})_{k,l=1}^N\subset\mathbb{Z}$ 
\begin{align*}
\mathbb{E}\left(\prod_{k,l=1}^N A_{k,l}^{n_{k,l}}\right) &= \mathbb{E}%
\left(\prod_{k,l=1}^N \exp\left(\mathrm{i}\gamma
n_{k,l}\theta_{k,l}\right)\right) \\
&= \mathbb{E}\left(\prod_{k,l=1}^N \exp\left(\mathrm{i}\gamma
n_{k,l}\sum_{i,j=1}^N R_{k,i}X_{i,j}T_{j,l}\right)\right) \\
&= \mathbb{E}\left(\exp\left(\sum_{i,j=1}^N \mathrm{i} \gamma
\left(\sum_{k,l=1}^N n_{k,l} R_{k,i}T_{j,l}\right) X_{i,j} \right)\right) \\
&= \mathbb{E}\left(\prod_{i,j=1}^N \exp\left( \mathrm{i} \gamma
\left(\sum_{k,l=1}^N n_{k,l} R_{k,i}T_{j,l}\right) X_{i,j} \right)\right) \\
&= \prod_{i,j=1}^N \exp\left(-\frac{\gamma^2}{2} \left(\sum_{k,l=1}^N
n_{k,l} R_{k,i}T_{j,l}\right)^2 \right).
\end{align*}
Since $R$ and $T$ are full rank, a linear algebra argument shows that the
previous exponents are all zero if and only if $(n_{k,l})_{k,l=1}^N$ are all
zero. Therefore, the joint moments of the entries of $A$ vanish as $%
\gamma\rightarrow\infty$ except when $n_{k,l}=0$ for all $k$ and $l$. It is
easy to show that these limiting moments are indeed the joint moments of the
entries of $U$. This conclude the proof of the first part.

\subsection{Case $\protect\gamma\to0$}

The following lemma and two theorems are from Appendix A in \cite{Bai2010}

\begin{lemma}
\label{Lem:NormHadamardProd} Let $A_1,\ldots,A_l\in\textnormal{M}_{m\times
n}\left(\mathbb{C}\right)$. Then 
\begin{equation*}
\left|\left|A_1\circ A_2\circ\cdots\circ
A_l\right|\right|_{}\leq\left|\left|A_1\right|\right|_{}\left|\left|A_2%
\right|\right|_{}\cdots\left|\left|A_l\right|\right|_{},
\end{equation*}
where $A\circ B$ denotes the pointwise or Hadamard product of $A$ and $B$.
\end{lemma}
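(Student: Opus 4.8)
The plan is to reduce immediately to the two-factor case and then prove that one by an elementary duality/Cauchy--Schwarz argument. Here $\|\cdot\|$ denotes the spectral (operator) norm, as in the cited reference. Since the Hadamard product is associative, $A_1\circ\cdots\circ A_l=(A_1\circ\cdots\circ A_{l-1})\circ A_l$, so once the inequality $\|A\circ B\|\le\|A\|\,\|B\|$ is established for arbitrary $A,B\in\textnormal{M}_{m\times n}(\mathbb{C})$, the general statement follows by a one-line induction on $l$, the base case $l=1$ being trivial.

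For the two-factor inequality I would estimate $|x^{*}(A\circ B)y|$ for unit vectors $x\in\mathbb{C}^{m}$ and $y\in\mathbb{C}^{n}$. Writing out the sum,
\[
x^{*}(A\circ B)y=\sum_{i=1}^{m}\sum_{j=1}^{n}\overline{x_i}\,a_{ij}\,b_{ij}\,y_j=\langle u,v\rangle,
\]
where $u,v\in\mathbb{C}^{mn}$ are indexed by pairs $(i,j)$ with $u_{(i,j)}=\overline{a_{ij}}\,x_i$ and $v_{(i,j)}=b_{ij}\,y_j$. By Cauchy--Schwarz, $|x^{*}(A\circ B)y|\le\|u\|\,\|v\|$. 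A short computation gives $\|u\|^{2}=\sum_i|x_i|^{2}\,\|e_i^{*}A\|^{2}\le\|A\|^{2}\|x\|^{2}$ and, symmetrically, $\|v\|^{2}\le\|B\|^{2}\|y\|^{2}$, using that the Euclidean norm of any row of $A$ (resp. column of $B$) is at most $\|A\|$ (resp. $\|B\|$). Taking the supremum over unit $x,y$ then yields $\|A\circ B\|\le\|A\|\,\|B\|$.

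An alternative and perhaps conceptually cleaner route — which also dispenses with the induction — is to use the identity $A_1\circ\cdots\circ A_l=P_m^{*}\,(A_1\otimes\cdots\otimes A_l)\,P_n$, where $P_k\in\textnormal{M}_{k^{l}\times k}(\mathbb{C})$ is the isometry sending the $i$-th standard basis vector of $\mathbb{C}^{k}$ to $e_i\otimes\cdots\otimes e_i\in\mathbb{C}^{k^{l}}$; verifying this identity is a direct entrywise check. Since $\|P_m\|=\|P_n\|=1$ (the columns of each $P_k$ are orthonormal) and $\|A_1\otimes\cdots\otimes A_l\|=\|A_1\|\cdots\|A_l\|$, submultiplicativity of the operator norm under composition gives the claim at once. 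Either way, I anticipate no genuine obstacle: the only mildly fiddly points are the bookkeeping in the identity $\|u\|^{2}=\sum_i|x_i|^{2}\|e_i^{*}A\|^{2}$ in the first approach, or the entrywise verification of the tensor identity in the second, both of which are routine.
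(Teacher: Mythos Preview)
Your argument is correct; both the Cauchy--Schwarz route and the tensor/isometry route are standard and valid proofs of the Hadamard-product norm inequality, and the induction to $l$ factors is immediate as you say. Note, however, that the paper does not supply its own proof of this lemma: it is simply quoted from Appendix~A of \cite{Bai2010} (together with the two subsequent theorems) and used as a black box in the estimate for $\|Y\|$ in Proposition~\ref{Prop:EntrywiseExpDist}. So there is nothing in the paper to compare your proof against beyond the external citation; your write-up supplies a self-contained justification where the paper gives none.
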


\begin{theorem}
\label{Thm:PerturbationInq} Let $A,B\in\textnormal{M}_{m\times n}\left(%
\mathbb{C}\right)$. Then 
\begin{equation*}
\sum_{k=1}^p |\sigma_k(A)-\sigma_k(B)|^2 \leq \text{tr}\left((A-B)(A-B)^*%
\right)
\end{equation*}
where $p=\min(m,n)$ and $\sigma_1(\cdot)\geq\cdots\geq\sigma_p(\cdot)$ are
the singular values of $\cdot$.
\end{theorem}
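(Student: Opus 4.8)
The plan is to deduce the inequality from its Hermitian counterpart (the Hoffman--Wielandt inequality) via the self-adjoint dilation, which is exactly the symmetrization trick used in Section~\ref{Section:AsymptoticCapacity}. Given $A,B\in\textnormal{M}_{m\times n}\left(\mathbb{C}\right)$, I would form the Hermitian matrices $\widehat A=\left(\begin{smallmatrix}0 & A\\ A^{\ast} & 0\end{smallmatrix}\right)$ and $\widehat B=\left(\begin{smallmatrix}0 & B\\ B^{\ast} & 0\end{smallmatrix}\right)$ in $\textnormal{M}_{m+n}\left(\mathbb{C}\right)$. The spectrum of $\widehat A$ is $\{\pm\sigma_1(A),\ldots,\pm\sigma_p(A)\}$ together with $|m-n|$ zeros, and likewise for $\widehat B$; since each spectrum is symmetric about the origin, listing the eigenvalues in decreasing order gives
\[
\sum_{k=1}^{m+n}\bigl(\lambda_k(\widehat A)-\lambda_k(\widehat B)\bigr)^2=2\sum_{k=1}^{p}\bigl(\sigma_k(A)-\sigma_k(B)\bigr)^2 ,
\]
a purely combinatorial check on how the zero block lines up. On the other hand $\widehat A-\widehat B=\left(\begin{smallmatrix}0 & A-B\\ (A-B)^{\ast} & 0\end{smallmatrix}\right)$, so $\Fnorm{\widehat A-\widehat B}^2=2\,\tr{(A-B)(A-B)^{\ast}}$. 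Thus the theorem reduces to proving $\sum_{k}\bigl(\lambda_k(\widehat A)-\lambda_k(\widehat B)\bigr)^2\le\Fnorm{\widehat A-\widehat B}^2$ for the Hermitian pair $\widehat A,\widehat B$.

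For that Hermitian bound I would diagonalize $\widehat A=U\Lambda_A U^{\ast}$ and $\widehat B=V\Lambda_B V^{\ast}$ with $\Lambda_A=\diag{\lambda_1(\widehat A),\ldots,\lambda_{m+n}(\widehat A)}$ and $\Lambda_B=\diag{\lambda_1(\widehat B),\ldots,\lambda_{m+n}(\widehat B)}$, set $S_{ij}:=\bigl|[U^{\ast}V]_{ij}\bigr|^{2}$, and expand, using that the rows and columns of the unitary $U^{\ast}V$ are unit vectors (so $S=(S_{ij})$ is doubly stochastic) to absorb $\tr{\widehat A^{2}}=\sum_{i,j}S_{ij}\lambda_i(\widehat A)^2$ and $\tr{\widehat B^{2}}=\sum_{i,j}S_{ij}\lambda_j(\widehat B)^2$:
\[
\Fnorm{\widehat A-\widehat B}^2=\tr{\widehat A^{2}}-2\,\tr{\widehat A\widehat B}+\tr{\widehat B^{2}}=\sum_{i,j}S_{ij}\bigl(\lambda_i(\widehat A)-\lambda_j(\widehat B)\bigr)^2 .
\]
The right-hand side is a linear functional of $S$ over the set of doubly stochastic matrices, so by the Birkhoff--von~Neumann theorem its minimum is attained at a permutation matrix, and hence it is at least $\min_{\pi}\sum_i\bigl(\lambda_i(\widehat A)-\lambda_{\pi(i)}(\widehat B)\bigr)^2$. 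A rearrangement argument---swapping any inversion of $\pi$ cannot decrease $\sum_i\lambda_i(\widehat A)\lambda_{\pi(i)}(\widehat B)$, hence cannot increase the sum of squares---shows that, because both eigenvalue lists are already sorted decreasingly, this minimum is attained at $\pi=\mathrm{id}$. Chaining the two displays gives the Hermitian inequality, and therefore the theorem.

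I expect the only non-mechanical step to be this reduction ``minimum over the Birkhoff polytope is a vertex minimum, and the optimal vertex is the identity permutation'', i.e.\ Birkhoff--von~Neumann followed by the rearrangement inequality; the dilation identities, the Frobenius expansion, and the edge cases $m=n$ (no zero block) and $A=B$ (both sides vanish) are routine. If one preferred to bypass the dilation, the same doubly-stochastic argument can be run directly on the singular value decompositions $A=U_A\Sigma_A W_A^{\ast}$ and $B=U_B\Sigma_B W_B^{\ast}$, bounding $\bigl|\tr{AB^{\ast}}\bigr|\le\sum_{i,j}|T_{ij}|\,\sigma_i(A)\sigma_j(B)$ with $T_{ij}=[U_B^{\ast}U_A]_{ji}[W_A^{\ast}W_B]_{ij}$ and checking by Cauchy--Schwarz that $(|T_{ij}|)$ is doubly substochastic before extending it to a doubly stochastic matrix; this works but involves two unitary factors instead of one, which is why I favour the dilation route.
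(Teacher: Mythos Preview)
Your argument is correct: the Hermitian dilation reduces the singular-value inequality to the Hoffman--Wielandt inequality, and your proof of the latter via the Birkhoff--von~Neumann theorem plus rearrangement is the standard one. The combinatorial check that the sorted eigenvalue lists of $\widehat A$ and $\widehat B$ line up as claimed (positive singular values, then the same number $|m-n|$ of zeros, then the negatives) is fine, since any zero singular values can be placed consistently in a non-strict ordering.

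However, there is nothing in the paper to compare against. The paper does not prove Theorem~\ref{Thm:PerturbationInq}; it is introduced with the sentence ``The following lemma and two theorems are from Appendix~A in \cite{Bai2010}'' and quoted without proof, serving only as an input to Lemma~\ref{Lem:InqSingularValuesSqrtTrace} and Proposition~\ref{Prop:EntrywiseExpDist}. Your write-up therefore supplies a self-contained proof where the paper gives only a citation; the dilation route you take is in fact the same one Bai--Silverstein use, so in that sense your approach matches the referenced source.
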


\begin{theorem}
\label{Thm:InqEigDistRank} Let $A$ and $B$ be two $m\times n$ complex
matrices. Then, for any Hermitian complex matrices $X\in\textnormal{M}%
_{m}\left(\mathbb{C}\right)$ and $Y\in\textnormal{M}_{n}\left(\mathbb{C}%
\right)$ we have that 
\begin{equation*}
||F^{X+AYA^*}-F^{X+BYB^*}|| \leq \frac{1}{m} \textnormal{rank}%
\left(A-B\right).
\end{equation*}
\end{theorem}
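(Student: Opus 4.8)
The plan is to realize each of $X+AYA^{*}$ and $X+BYB^{*}$ as a Schur complement of a bordered Hermitian matrix in which the difference of the two borderings is a \emph{balanced} perturbation, and then to run the elementary rank/inertia argument on the bordered matrices. This is preferable to the direct route, in which one would estimate $\rank{AYA^{*}-BYB^{*}}$ and invoke the basic rank inequality for Hermitian matrices; that route only yields $\rank{AYA^{*}-BYB^{*}}\le 2\rank{A-B}$ and hence the stated bound with a spurious factor of two.

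First I would reduce to the case that $Y$ is invertible: writing $Y=\iota Y'\iota^{*}$, where $\iota$ is the isometric inclusion of $\mathrm{range}(Y)$ and $Y'$ is Hermitian and invertible, one has $AYA^{*}=(A\iota)Y'(A\iota)^{*}$, $BYB^{*}=(B\iota)Y'(B\iota)^{*}$, and $\rank{A\iota-B\iota}\le\rank{A-B}$, so it is enough to prove the inequality for invertible $Y$. Next, for a fixed real $\xi$ that is not an eigenvalue of either $X+AYA^{*}$ or $X+BYB^{*}$, I would introduce the $(m+n)\times(m+n)$ Hermitian matrices
\[
\mathcal{M}_{A}(\xi)=\begin{pmatrix}X-\xi\mathrm{I}_{m}&A\\ A^{*}&-Y^{-1}\end{pmatrix},\qquad \mathcal{M}_{B}(\xi)=\begin{pmatrix}X-\xi\mathrm{I}_{m}&B\\ B^{*}&-Y^{-1}\end{pmatrix}.
\]
Since the block $-Y^{-1}$ is invertible, its Schur complement in $\mathcal{M}_{A}(\xi)$ is exactly $(X-\xi\mathrm{I}_{m})+AYA^{*}$, and Haynsworth's inertia-additivity formula gives $\nu_{-}\big((X-\xi\mathrm{I}_{m})+AYA^{*}\big)=\nu_{-}(\mathcal{M}_{A}(\xi))-\nu_{+}(Y)$, where $\nu_{\pm}(\cdot)$ count the strictly positive and strictly negative eigenvalues; the analogous identity holds for $B$ with the \emph{same} correction $\nu_{+}(Y)$. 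As $\nu_{-}\big((X-\xi\mathrm{I}_{m})+AYA^{*}\big)=m\,F^{X+AYA^{*}}(\xi)$ for such $\xi$, subtracting the two identities shows that $m\big(F^{X+AYA^{*}}(\xi)-F^{X+BYB^{*}}(\xi)\big)=\nu_{-}(\mathcal{M}_{A}(\xi))-\nu_{-}(\mathcal{M}_{B}(\xi))$.

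It then remains to bound the right-hand side. The difference $\mathcal{M}_{A}(\xi)-\mathcal{M}_{B}(\xi)=\left(\begin{smallmatrix}0&A-B\\(A-B)^{*}&0\end{smallmatrix}\right)$ has nonzero eigenvalues equal to $\pm$ the nonzero singular values of $A-B$, so it has exactly $r:=\rank{A-B}$ positive and $r$ negative eigenvalues. The elementary inequalities for the negative-eigenvalue count under a Hermitian perturbation $S$, namely $\nu_{-}(T)-\nu_{+}(S)\le\nu_{-}(T+S)\le\nu_{-}(T)+\nu_{-}(S)$ (a consequence of the Courant--Fischer principle, equivalently of Weyl's inequalities), applied with $T=\mathcal{M}_{B}(\xi)$ and $S=\mathcal{M}_{A}(\xi)-\mathcal{M}_{B}(\xi)$, yield $\big|\nu_{-}(\mathcal{M}_{A}(\xi))-\nu_{-}(\mathcal{M}_{B}(\xi))\big|\le r$. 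Hence $\big|F^{X+AYA^{*}}(\xi)-F^{X+BYB^{*}}(\xi)\big|\le r/m$ for every $\xi$ outside a finite set, and by right-continuity the supremum of the left-hand side over all $\xi$ — i.e.\ the Kolmogorov distance $\|F^{X+AYA^{*}}-F^{X+BYB^{*}}\|$ — is at most $r/m=\rank{A-B}/m$.

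The crux, and the only place needing care, is the choice of the bordered matrix: it converts the perturbation $AYA^{*}-BYB^{*}$ (whose rank can be as large as $2r$) into the block-antidiagonal perturbation, which has \emph{exactly} $r$ positive and $r$ negative eigenvalues, and it is this exact balance that produces the constant $1$ rather than $2$. The remaining routine points are the sign bookkeeping in Haynsworth's formula and the reduction to invertible $Y$. A statement of this type is recorded in Appendix~A of \cite{Bai2010}; an alternative, more computational proof interpolates $A$ to $B$ through rank-one increments and verifies, via a $2\times 2$ determinant, that each increment changes $X+AYA^{*}$ by a Hermitian matrix with at most one positive and at most one negative eigenvalue, so that the Kolmogorov distance grows by at most $1/m$ at each of the $r$ steps.
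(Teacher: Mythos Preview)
The paper does not prove this statement at all: it is quoted verbatim as one of the results ``from Appendix~A in \cite{Bai2010}'' and used as a black box in the proof of Proposition~\ref{Prop:EntrywiseExpDist}. So there is no paper proof to compare against, only the cited reference.

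Your argument is correct and self-contained. The reduction to invertible $Y$ is clean, the Schur-complement/Haynsworth identity gives exactly $\nu_{-}\big((X-\xi\mathrm{I}_m)+AYA^{*}\big)=\nu_{-}(\mathcal{M}_A(\xi))-\nu_{+}(Y)$ with the same correction term for $B$, and the antidiagonal perturbation $\left(\begin{smallmatrix}0&A-B\\(A-B)^{*}&0\end{smallmatrix}\right)$ indeed has inertia $(r,r,m+n-2r)$, so the Weyl/Courant--Fischer bound $|\nu_{-}(\mathcal{M}_A)-\nu_{-}(\mathcal{M}_B)|\le r$ follows. The passage from ``all $\xi$ outside a finite set'' to the supremum via right-continuity is the right way to close. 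Your remark about why the bordering is essential --- turning a rank-$\le 2r$ perturbation of the original matrices into a balanced rank-$2r$ perturbation with exactly $r$ positive and $r$ negative eigenvalues --- is the heart of the matter and explains why the na\"{i}ve route loses a factor of two. The alternative rank-one interpolation you sketch at the end is closer in spirit to how Bai and Silverstein organize the argument in \cite{Bai2010}, but your Haynsworth approach is arguably more transparent.
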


In this rest of this subsection, $F^A$ will denote the empirical
distribution of the \textit{singular values} $\sigma_1(A)\geq\cdots\geq%
\sigma_n(A)$ of $A\in\textnormal{M}_{n\times n}\left(\mathbb{C}\right)$.
Since the classical convergence theorems in random matrices hold almost
surely, it is enough to deal with the case of non-random matrices.

\begin{lemma}
\label{Lem:InqSingularValuesSqrtTrace} Let $A,B\in\textnormal{M}_{N}\left(%
\mathbb{C}\right)$. Then 
\begin{equation*}
\sum_{k=1}^N |\sigma_k(A)-\sigma_k(B)| \leq \sqrt{N\text{tr}%
\left((A-B)(A-B)^*\right)}.
\end{equation*}
\end{lemma}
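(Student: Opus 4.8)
The plan is to deduce this directly from Theorem \ref{Thm:PerturbationInq} together with the Cauchy--Schwarz inequality. First I would apply Theorem \ref{Thm:PerturbationInq} in the square case $m=n=N$, so that $p=N$ and
\begin{equation*}
\sum_{k=1}^N |\sigma_k(A)-\sigma_k(B)|^2 \leq \operatorname{tr}\left((A-B)(A-B)^*\right).
\end{equation*}
This handles the $\ell^2$ norm of the vector of singular-value differences; the remaining task is merely to pass from the $\ell^2$ bound to the $\ell^1$ bound on the left-hand side of the lemma.

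Next I would invoke Cauchy--Schwarz for the pairing of the nonnegative vector $(|\sigma_1(A)-\sigma_1(B)|,\ldots,|\sigma_N(A)-\sigma_N(B)|)$ with the all-ones vector in $\mathbb{R}^N$:
\begin{equation*}
\sum_{k=1}^N |\sigma_k(A)-\sigma_k(B)| \leq \sqrt{N}\,\left(\sum_{k=1}^N |\sigma_k(A)-\sigma_k(B)|^2\right)^{1/2}.
\end{equation*}
Combining the two displays gives the claimed inequality $\sum_{k=1}^N |\sigma_k(A)-\sigma_k(B)| \leq \sqrt{N\operatorname{tr}\left((A-B)(A-B)^*\right)}$.

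There is no real obstacle here: the substantive ingredient is the Hoffman--Wielandt-type estimate already recorded as Theorem \ref{Thm:PerturbationInq}, and the only step to carry out is the elementary Cauchy--Schwarz bound converting the sum of squares into a sum. The one point worth a sentence of care is that $\operatorname{tr}\left((A-B)(A-B)^*\right)=\Fnorm{A-B}^2=\sum_k \sigma_k(A-B)^2$ is exactly the quantity appearing on the right of Theorem \ref{Thm:PerturbationInq}, so the two inequalities chain together with no loss.
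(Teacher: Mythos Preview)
Your argument is correct and matches the paper's own proof, which simply says the lemma is ``a straightforward application of Theorem \ref{Thm:PerturbationInq} and the generalized means.'' The Cauchy--Schwarz step you wrote out is exactly the comparison between the arithmetic and quadratic means, so there is no substantive difference.
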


\begin{proof}
An straightforward application of Theorem \ref{Thm:PerturbationInq} and the generalized means.
\end{proof}

\begin{definition}
We define the entrywise exponential function $\exp_\circ:\textnormal{M}%
_{m\times n}\left(\mathbb{C}\right)\to\textnormal{M}_{m\times n}\left(%
\mathbb{C}\right)$ by 
\begin{equation*}
\exp_\circ(A) = (\exp(A_{i,j}))_{i,j}
\end{equation*}
for all $A\in\textnormal{M}_{m\times n}\left(\mathbb{C}\right)$.
\end{definition}

\begin{proposition}
\label{Prop:EntrywiseExpDist} Let $A\in\textnormal{M}_{N}\left(\mathbb{C}%
\right)$ for $N\in\mathbb{N}$ and $1>\gamma>0$. Let $X=\exp_\circ(i\gamma A)$%
, then 
\begin{equation}
\frac{1}{N}\sum_{k=2}^N \left|\sigma_k\left(\frac{X}{\gamma}%
\right)-\sigma_k(A)\right| \leq \gamma \exp(\left|\left|A\right|\right|_{})+%
\frac{2||A||}{N}.
\end{equation}
\end{proposition}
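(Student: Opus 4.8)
The plan is to Taylor-expand the entrywise exponential and then strip off its three natural pieces one at a time. Let $J\in\textnormal{M}_{N}\left(\mathbb{C}\right)$ be the matrix all of whose entries equal $1$. From $\exp(\mathrm{i}\gamma z)=1+\mathrm{i}\gamma z+\sum_{m\geq 2}\frac{(\mathrm{i}\gamma z)^{m}}{m!}$, applying the series entrywise to $A$ gives
\[
X=J+\mathrm{i}\gamma A+E,\qquad E=\sum_{m\geq 2}\frac{\mathrm{i}^{m}\gamma^{m}}{m!}\,A^{\circ m},
\]
where $A^{\circ m}$ denotes the $m$-fold Hadamard power of $A$. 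Dividing by $\gamma$, $\frac{X}{\gamma}=\mathrm{i}A+\frac{1}{\gamma}J+\frac{1}{\gamma}E$, and since scaling a matrix by $\mathrm{i}$ does not change its singular values, it suffices to compare the singular values of $\mathrm{i}A+\frac1\gamma J+\frac1\gamma E$ with those of $\mathrm{i}A$. I would do this by removing $\frac1\gamma E$ first and then the rank-one matrix $\frac1\gamma J$.

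First I would bound $\frac1\gamma E$ in operator norm. By Lemma~\ref{Lem:NormHadamardProd} we have $\|A^{\circ m}\|\leq\|A\|^{m}$, and since $0<\gamma<1$ implies $\gamma^{m-1}\leq\gamma$ for every $m\geq 2$,
\[
\left\|\tfrac1\gamma E\right\|\;\leq\;\sum_{m\geq 2}\frac{\gamma^{m-1}}{m!}\,\|A\|^{m}\;\leq\;\gamma\sum_{m\geq 2}\frac{\|A\|^{m}}{m!}\;\leq\;\gamma\,\exp(\|A\|).
\]
Writing $M:=\mathrm{i}A+\frac1\gamma J$, so that $\frac X\gamma=M+\frac1\gamma E$, Lemma~\ref{Lem:InqSingularValuesSqrtTrace} (together with $\|\cdot\|_{F}\leq\sqrt N\,\|\cdot\|$) yields $\sum_{k=1}^{N}\bigl|\sigma_{k}(\tfrac X\gamma)-\sigma_{k}(M)\bigr|\leq\sqrt N\,\|\tfrac1\gamma E\|_{F}\leq N\,\|\tfrac1\gamma E\|\leq N\gamma\exp(\|A\|)$, hence $\frac1N\sum_{k=2}^{N}\bigl|\sigma_{k}(\tfrac X\gamma)-\sigma_{k}(M)\bigr|\leq\gamma\exp(\|A\|)$, which is the first term on the right-hand side.

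It remains to compare $M=\mathrm{i}A+\frac1\gamma J$ with $\mathrm{i}A$, and this is the step I expect to be the main obstacle: $\frac1\gamma J$ has rank one but operator norm $N/\gamma$, which blows up as $\gamma\to0$, so no norm-based perturbation estimate can help and one must exploit the rank. Weyl's inequality for singular values, $\sigma_{i+j-1}(B+C)\leq\sigma_{i}(B)+\sigma_{j}(C)$, applied with $j=2$ and a rank-one $C$ (so $\sigma_{2}(C)=0$) gives $\sigma_{k+1}(M)\leq\sigma_{k}(A)$ (take $B=\mathrm{i}A$, $C=\frac1\gamma J$) and $\sigma_{k+1}(A)\leq\sigma_{k}(M)$ (take $B=M$, $C=-\frac1\gamma J$), so that $\sigma_{k+1}(A)\leq\sigma_{k}(M)\leq\sigma_{k-1}(A)$ for every $k\geq 2$. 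Consequently $|\sigma_{k}(M)-\sigma_{k}(A)|\leq\bigl(\sigma_{k-1}(A)-\sigma_{k}(A)\bigr)+\bigl(\sigma_{k}(A)-\sigma_{k+1}(A)\bigr)$ for $k\geq 2$, and summing over $k=2,\dots,N$ the right-hand side telescopes to $\bigl(\sigma_{1}(A)-\sigma_{N}(A)\bigr)+\bigl(\sigma_{2}(A)-\sigma_{N+1}(A)\bigr)\leq 2\sigma_{1}(A)=2\|A\|$, with the convention $\sigma_{N+1}(A):=0$; dividing by $N$ gives $\frac1N\sum_{k=2}^{N}|\sigma_{k}(M)-\sigma_{k}(A)|\leq\frac{2\|A\|}{N}$.

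Combining the two estimates through $|\sigma_{k}(\tfrac X\gamma)-\sigma_{k}(A)|\leq|\sigma_{k}(\tfrac X\gamma)-\sigma_{k}(M)|+|\sigma_{k}(M)-\sigma_{k}(A)|$, summed over $k=2,\dots,N$ and divided by $N$, produces exactly $\gamma\exp(\|A\|)+\frac{2\|A\|}{N}$. The one genuinely subtle point is the treatment of $\frac1\gamma J$: it is precisely because this matrix has rank one that discarding a single singular value of $\frac X\gamma$ is enough, which is why the sum in the statement starts at $k=2$, and the resulting one-index shift in the interlacing is exactly what yields the $2\|A\|/N$ correction. Everything else — the entrywise expansion, the Hadamard-norm bound, and the singular-value perturbation inequalities — is routine.
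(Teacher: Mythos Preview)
Your proof is correct and follows essentially the same three-step structure as the paper: expand entrywise, peel off the higher-order Hadamard remainder via Lemma~\ref{Lem:NormHadamardProd} and Lemma~\ref{Lem:InqSingularValuesSqrtTrace}, then handle the rank-one all-ones matrix by an interlacing-and-telescoping argument. The only difference is in how the interlacing $\sigma_{k+1}(A)\leq\sigma_k(M)\leq\sigma_{k-1}(A)$ is justified: the paper derives it from the empirical-distribution rank bound (Theorem~\ref{Thm:InqEigDistRank}), whereas you obtain it directly from Weyl's singular-value inequality, which is arguably the more transparent route.
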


\begin{proof}
Using the power series for the exponential function we obtain that
\begin{equation}
X={\bf 1}_N+i\gamma A+\sum_{n\geq2} \frac{(i\gamma A)^{\circ n}}{n!}
\end{equation}
where $T^{\circ n}=T\circ T\circ \cdots\circ T$. Define $Z={\bf 1}_N + i\gamma A$ and $Y=X-Z$. By Lemma \ref{Lem:NormHadamardProd} and the fact that $\gamma<1$,
\begin{align*}
\norm{}{Y} &= \gamma^2 \norm{}{\sum_{n\geq2} \gamma^{n-2} \frac{(iA)^{\circ n}}{n!}}\\
&\leq \gamma^2 \exp(\norm{}{A}).
\end{align*}
By Lemma \ref{Lem:InqSingularValuesSqrtTrace} we have that
\begin{align}
\nonumber \sum_{k=1}^N \left|\sigma_k(X)-\sigma_k(Z)\right| &\leq \sqrt{N\tr{YY^*}}\\
\nonumber &\leq \sqrt{N^2\norm{}{Y}^2}\\
\label{Eq:EstimateXZ} &\leq \gamma^2 N\exp(\norm{}{A})
\end{align}
and in particular
\begin{equation}
\frac{1}{N} \sum_{k=2}^N \left|\sigma_k\left(\frac{X}{\gamma}\right) - \sigma_k\left(\frac{Z}{\gamma}\right)\right| \leq \gamma \exp(\norm{}{A}).
\end{equation}
Applying Theorem \ref{Thm:InqEigDistRank} to the matrices $Z$ and $\gamma A$ we obtain\footnote{Recall that the singular values of $i\gamma A$ and $\gamma A$ are equal, i.e. $\sigma_k(i\gamma A)=\sigma_k(\gamma A)$ for all $1\leq k\leq n$.}
\begin{equation*}
\norm{}{F^{ZZ^*}-F^{AA^*}} \leq \frac{1}{N} \rank{{\bf 1}_N} = \frac{1}{N},
\end{equation*}
which implies that
\begin{equation*}
\left| \sum_{k=1}^N \I{x\leq \sigma_k(Z)^2} - \sum_{k=1}^N \I{x\leq\sigma_k(\gamma A)^2} \right| \leq 1
\end{equation*}
for all $x\in\R$. This implies that for $2\leq k\leq N-1$
\begin{equation}
\label{Eq:Interleaving}
\sigma_{k+1}(\gamma A)\leq \sigma_k(Z) \leq \sigma_{k-1}(\gamma A),
\end{equation}
and equivalently
\begin{equation*}
\sigma_{k+1}(\gamma A)-\sigma_k(\gamma A)\leq \sigma_k(Z)-\sigma_k(\gamma A) \leq \sigma_{k-1}(\gamma A)-\sigma_k(\gamma A).
\end{equation*}
Therefore
\begin{align*}
|\sigma_k(Z)-\sigma_k(\gamma A)| &\leq \sigma_{k-1}(\gamma A)-\sigma_k(\gamma A)+\sigma_k(\gamma A)-\sigma_{k+1}(\gamma A)\\
&= \sigma_{k-1}(\gamma A)-\sigma_{k+1}(\gamma A),
\end{align*}
and consequently
\begin{align*}
\sum_{k=2}^N |\sigma_k(Z)-\sigma_k(\gamma A)| &\leq \sum_{k=2}^{N-1} \sigma_{k-1}(\gamma A)-\sigma_{k+1}(\gamma A)+|\sigma_N(Z)-\sigma_N(\gamma A)|\\
&\leq \sigma_1(\gamma A) + \sigma_2(\gamma A) - \sigma_{N-1}(\gamma A)-\sigma_N(\gamma A)+\sigma_N(Z)+\sigma_N(\gamma A).
\end{align*}
Using the same argument that in equation (\ref{Eq:Interleaving}) we have that $\sigma_N(Z)\leq\sigma_{N-1}(\gamma A)$ and thus
\begin{equation*}
\sum_{k=2}^N |\sigma_k(Z)-\sigma_k(\gamma A)| \leq 2\gamma ||A||
\end{equation*}
and in particular
\begin{equation*}
\frac{1}{N} \sum_{k=2}^N \left|\sigma_k\left(\frac{Z}{\gamma}\right)-\sigma_k(A)\right| \leq \frac{2||A||}{N}.
\end{equation*}
By the triangle inequality we conclude that
\begin{equation*}
\frac{1}{N}\sum_{k=2}^N \left|\sigma_k\left(\frac{X}{\gamma}\right)-\sigma_k(A)\right| \leq \gamma \exp(\norm{}{A})+\frac{2||A||}{N}
\end{equation*}
as claimed.
\end{proof}

Observe that the previous analysis exclude the biggest singular value of $%
X/\sigma$. In the following proposition we study the behavior of this
singular value.

\begin{proposition}
In the notation of the previous proposition, 
\begin{equation*}
\left|\frac{\sigma_1(X/\gamma)}{N/\gamma}-1\right| \leq \gamma (\gamma
\exp(\left|\left|A\right|\right|_{})+\left|\left|A\right|\right|_{}).
\end{equation*}
\end{proposition}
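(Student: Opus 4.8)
The plan is to bound $\sigma_1(X/\gamma)$ directly through the operator norm, exploiting the identity $\sigma_1(\cdot)=\norm{}{\cdot}$ and reusing the splitting of $X$ already established in the proof of Proposition~\ref{Prop:EntrywiseExpDist}. There we wrote $X=\mathbf{1}_N+i\gamma A+Y$, where $\mathbf{1}_N$ denotes the $N\times N$ all-ones matrix and $Y$ collects the higher-order Hadamard powers, with $\norm{}{Y}\leq\gamma^{2}\exp(\norm{}{A})$ by Lemma~\ref{Lem:NormHadamardProd} and $\gamma<1$. Dividing through by $\gamma$ gives $X/\gamma=\mathbf{1}_N/\gamma+iA+Y/\gamma$.

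First I would record that $\sigma_1(\mathbf{1}_N)=\norm{}{\mathbf{1}_N}=N$: the all-ones matrix equals $\mathbf{1}\mathbf{1}^{\top}$ for the all-ones vector $\mathbf{1}$, hence it is rank one with single nonzero singular value $\norm{}{\mathbf{1}}^{2}=N$. Then, applying the reverse triangle inequality for the operator norm to the splitting above,
$$\left|\sigma_1\!\left(\frac{X}{\gamma}\right)-\frac{N}{\gamma}\right|=\Bigl|\norm{}{X/\gamma}-\norm{}{\mathbf{1}_N/\gamma}\Bigr|\leq\norm{}{iA+Y/\gamma}\leq\norm{}{A}+\frac{\norm{}{Y}}{\gamma}\leq\norm{}{A}+\gamma\exp(\norm{}{A}).$$
Dividing both sides by $N/\gamma$ and using $N\geq 1$ then yields
$$\left|\frac{\sigma_1(X/\gamma)}{N/\gamma}-1\right|\leq\frac{\gamma}{N}\bigl(\norm{}{A}+\gamma\exp(\norm{}{A})\bigr)\leq\gamma\bigl(\gamma\exp(\norm{}{A})+\norm{}{A}\bigr),$$
which is the claimed bound.

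I do not expect any genuine obstacle here: once the decomposition of $X$ is in hand the argument is a one-line triangle inequality. The only point deserving a little care is the identification $\norm{}{\mathbf{1}_N}=N$ together with the observation that, since $\sigma_1$ coincides exactly with the operator norm, the top singular value can be controlled by perturbation directly. This is precisely why it is natural to isolate $\sigma_1$ from the remaining singular values $\sigma_2,\ldots,\sigma_N$, whose control in Proposition~\ref{Prop:EntrywiseExpDist} required the interlacing and rank-one arguments via Theorem~\ref{Thm:InqEigDistRank}.
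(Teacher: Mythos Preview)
Your argument is correct. It rests on the same decomposition $X=\mathbf{1}_N+i\gamma A+Y$ with $\norm{}{Y}\leq\gamma^2\exp(\norm{}{A})$ and on the identification $\sigma_1(\mathbf{1}_N)=N$, but the perturbation step is handled differently from the paper. The paper proceeds in two hops, first bounding $|\sigma_1(X)-\sigma_1(Z)|$ via the Hoffman--Wielandt-type estimate~(\ref{Eq:EstimateXZ}) and then $|\sigma_1(Z/\gamma)-\sigma_1(\mathbf{1}_N/\gamma)|$ via Lemma~\ref{Lem:InqSingularValuesSqrtTrace}; each of these passes through the Frobenius norm and therefore picks up an extra factor of $N$, yielding $|\sigma_1(X/\gamma)-N/\gamma|\leq N(\gamma\exp(\norm{}{A})+\norm{}{A})$, which becomes the stated bound only after dividing by $N/\gamma$. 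Your route, by contrast, uses the identity $\sigma_1=\norm{}{\cdot}$ and the reverse triangle inequality for the operator norm directly, giving $|\sigma_1(X/\gamma)-N/\gamma|\leq \norm{}{A}+\gamma\exp(\norm{}{A})$ without the factor $N$; you then discard this extra strength at the end by invoking $N\geq1$. So your approach is both more elementary (it avoids Theorem~\ref{Thm:PerturbationInq} and Lemma~\ref{Lem:InqSingularValuesSqrtTrace} altogether) and sharper at the intermediate stage, at the cost of being specific to the \emph{top} singular value, whereas the paper's lemmas were already set up for the full spectrum in Proposition~\ref{Prop:EntrywiseExpDist} and are simply reused here.
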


This shows that $\sigma_1(X/\gamma)$ is roughly $N/\gamma$, while the bulk
of $X/\gamma$ is essentially the same as $A$.

\begin{proof}
By inequality (\ref{Eq:EstimateXZ}) in the first part of the previous proof
\begin{equation}
\left|\sigma_1\left(\frac{X}{\gamma}\right)-\sigma_1\left(\frac{Z}{\gamma}\right)\right| \leq \gamma N \exp(\norm{}{A}).
\end{equation}
Using Lemma \ref{Lem:InqSingularValuesSqrtTrace} for $Z/\gamma$ and ${\bf 1}_N/\gamma$
\begin{align*}
\left|\sigma_1\left(\frac{Z}{\gamma}\right) - \sigma_1\left(\frac{{\bf 1}_N}{\gamma}\right)\right| &\leq \sqrt{N\tr{AA^*}}\\
&\leq N \norm{}{A}.
\end{align*}
A straightforward computation shows that $\sigma_1({\bf 1}_N/\gamma)=N/\gamma$, so by the triangle inequality
\begin{align*}
\left|\frac{\sigma_1(X/\gamma)}{N/\gamma}-1\right| \leq \gamma (\gamma \exp(\norm{}{A})+\norm{}{A}),
\end{align*}
as claimed.
\end{proof}

Finally, with the previous quantitative results we prove the following
qualitative result.

\begin{theorem}
Let $A_N\in\textnormal{M}_{N}\left(\mathbb{C}\right)$ such that $%
\left|\left|A_N\right|\right|_{}$ converge as $N\to\infty$ and $F^{A_N}
\Rightarrow F^A$. Define $X_N=\exp_\circ(i\gamma_N A_N)$. If $%
(\gamma_N)_{N\geq1}$ is a sequence of positive real numbers such that $%
\gamma_N\to0$ as $N\to\infty$, then $F^{X_N/\gamma_N} \Rightarrow F^A$ as $%
N\to\infty$.
\end{theorem}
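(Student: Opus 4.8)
The plan is to compare the empirical singular‑value distribution of $X_N/\gamma_N$ directly with that of $A_N$, using the two quantitative estimates just proved, and then feed in the hypothesis $F^{A_N}\Rightarrow F^A$. Since $\gamma_N\to 0$, we have $\gamma_N\in(0,1)$ for all $N$ large enough, so both preceding propositions apply with $A=A_N$, $\gamma=\gamma_N$, $X=X_N$; and since $\norm{}{A_N}$ converges it is bounded, say $\norm{}{A_N}\le C$ for all $N$. Write $\mu_N:=F^{X_N/\gamma_N}$ and $\nu_N:=F^{A_N}$. The key point is that I will \emph{not} try to control a Wasserstein‑type distance, but rather the bounded‑Lipschitz (Fortet--Mourier) distance, which metrizes weak convergence of probability measures on $\mathbb{R}$: it suffices to estimate $\left|\int f\,\dif\mu_N-\int f\,\dif\nu_N\right|$ for every $f:\mathbb{R}\to\mathbb{R}$ with $\norm{\infty}{f}\le 1$ and Lipschitz constant at most $1$.

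For such an $f$, matching $\sigma_k(X_N/\gamma_N)$ with $\sigma_k(A_N)$ for each $k$ and isolating the top index gives
\begin{align*}
\left|\int f\,\dif\mu_N-\int f\,\dif\nu_N\right|
&\le \frac{1}{N}\left|f\!\left(\sigma_1\!\left(\tfrac{X_N}{\gamma_N}\right)\right)-f(\sigma_1(A_N))\right|
+\frac{1}{N}\sum_{k=2}^{N}\left|f\!\left(\sigma_k\!\left(\tfrac{X_N}{\gamma_N}\right)\right)-f(\sigma_k(A_N))\right|\\
&\le \frac{2}{N}+\frac{1}{N}\sum_{k=2}^{N}\left|\sigma_k\!\left(\tfrac{X_N}{\gamma_N}\right)-\sigma_k(A_N)\right|.
\end{align*}
By Proposition~\ref{Prop:EntrywiseExpDist} the last sum is at most $\gamma_N\exp(\norm{}{A_N})+\frac{2\norm{}{A_N}}{N}\le \gamma_N\exp(C)+\frac{2C}{N}$, which tends to $0$. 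Hence $\left|\int f\,\dif\mu_N-\int f\,\dif\nu_N\right|\to 0$ uniformly over all such $f$, so $\mu_N$ and $\nu_N$ are asymptotically equivalent in the bounded‑Lipschitz metric. Since $\nu_N=F^{A_N}\Rightarrow F^A$ by hypothesis, the triangle inequality in that metric yields $\mu_N=F^{X_N/\gamma_N}\Rightarrow F^A$, which is the claim.

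The one delicate point — and the reason the two propositions are stated the way they are — is the largest singular value: by the proposition preceding this theorem, $\sigma_1(X_N/\gamma_N)$ is of order $N/\gamma_N$, hence escapes to $+\infty$, while $\sigma_1(A_N)=\norm{}{A_N}\le C$. Consequently $\tfrac1N\sum_{k=1}^N|\sigma_k(X_N/\gamma_N)-\sigma_k(A_N)|$ does \emph{not} vanish, so a Wasserstein‑$1$ comparison of $\mu_N$ and $\nu_N$ is hopeless; this forces the use of bounded test functions (equivalently the Lévy or bounded‑Lipschitz metric), under which a single atom of mass $1/N$ running off to infinity is harmless. Once one commits to that metric, there is no further obstacle: the argument is just the above splitting together with the boundedness of $\norm{}{A_N}$ and the estimate of Proposition~\ref{Prop:EntrywiseExpDist}.
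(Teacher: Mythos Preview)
Your proof is correct and follows essentially the same route as the paper's: both test against bounded Lipschitz functions, split off the $k=1$ term and bound it using only the boundedness of $f$, control the remaining $k\ge 2$ terms via the Lipschitz property and Proposition~\ref{Prop:EntrywiseExpDist}, and finish with the triangle inequality together with $F^{A_N}\Rightarrow F^A$. Your version is in fact a bit more careful in making explicit that $\gamma_N<1$ eventually and that $\norm{}{A_N}$ is bounded, and your closing paragraph correctly identifies why bounded test functions are essential.
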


\begin{proof}
Recall that $F^{X_N/\gamma_N}\Rightarrow F^A$ if and only if
\begin{equation*}
\lim_{N\to\infty} \int_\R f(x) \dif F^{X_N/\gamma_N}(x) = \int_\R f(x) \dif F^A(x)
\end{equation*}
for all $f$ bounded Lipschitz function. Let $f$ be any bounded Lipschitz function, by the previous propositions
\begin{align*}
\bigg| \int_\R f(x) \dif F^{X_N/\gamma_N}(x) &- \int_\R f(x) \dif F^{A_N}(x) \bigg|\\
&= \left| \frac{1}{N} \sum_{k=1}^N f\left(\sigma_k\left(\frac{X_N}{\gamma_N}\right)\right) - \frac{1}{N} \sum_{k=1}^N f(\sigma_k(A_N))\right|\\
&\leq \frac{1}{N} \sum_{k=1}^N \left|f\left(\sigma_k\left(\frac{X_N}{\gamma_N}\right)\right)-f(\sigma_k(A_N))\right|\\
&\leq \frac{K}{N} \sum_{k=2}^N \left|\sigma_k\left(\frac{X_N}{\gamma_N}\right)-\sigma_k(A_N)\right| +
\frac{\left|f\left(\sigma_1\left(\frac{X_N}{\gamma_N}\right)\right)\right|+|f(\sigma_1(A_N))|}{N},
\end{align*}
where $K$ is the Lipschitz constant of $f$. Since $f$ is bounded and $\norm{}{A_N}$ converge as $N\to\infty$, by Proposition \ref{Prop:EntrywiseExpDist} the previous expression converges to 0 as $N\to\infty$. Finally, since $F^{A_N} \Rightarrow F^A$ as $N\to\infty$ we have that
\begin{equation*}
\lim_{N\to\infty} \left| \int_\R f(x) \dif F^{A_N}(x) - \int_\R f(x) \dif F^A(x)\right| = 0
\end{equation*}
and by the triangle inequality the result follows.
\end{proof}

The second part of Theorem \ref{Thm:ExtremalCases} is an straightforward
application of the previous theorem.

\section{Computation of Some Cauchy Transforms}

\noindent\textit{Proof of Theorem \ref{Thm:CauchyTrk2}.} The identities $%
E_{k,k}BE_{k,k}=B_{k,k}E_{k,k}$ and $E_{k,k}^{2}=E_{k,k}$ lead to 
\begin{align*}
G_{r_{k}^{2}E_{k,k}}(B) & =\sum_{n\geq0}B^{-1}E\left(
(r_{k}^{2}E_{k,k}B^{-1})^{n}\right) \\
& =B^{-1}+B^{-1}\sum_{n\geq1}\varphi\left( r_{k}^{2n}\right)
[B^{-1}]_{k,k}^{n-1}E_{k,k}B^{-1} \\
& =B^{-1}+[B^{-1}]_{k,k}^{-2}\left( \sum_{n\geq0}\varphi\left(
r_{k}^{2n}\right) [B^{-1}]_{k,k}^{n+1}-[B^{-1}]_{k,k}\right)
B^{-1}E_{k,k}B^{-1} \\
& =B^{-1}+[B^{-1}]_{k,k}^{-2}\left(
G_{r_{k}^{2}}([B^{-1}]_{k,k}^{-1})-[B^{-1}]_{k,k}\right) B^{-1}E_{k,k}B^{-1}.
\end{align*}
Of course, the previous equations do not hold for every matrix $B\in 
\textnormal{M}_{2n}\left( \mathbb{C}\right) $, in particular, the power
series expansion is valid only in a neighborhood of infinity. However, the
previous computation can be carried out at the level of formal power series,
and then extended via analytical continuation to a suitable domain.

\noindent\textit{Proof of Theorem \ref{Thm:CauchyTQ}.} A straightforward
computation shows that 
\begin{align}
G_{\mathbf{Q}}(D) & =\sum_{k\geq0}D^{-1} E\left( (\mathbf{Q}%
D^{-1})^{k}\right)  \notag \\
& =\sum_{k\geq0}\textnormal{diag}\left( d_{1}^{-1},\ldots,d_{2n}^{-1}\right)
E\left( \textnormal{diag}\left(
d_{1}^{-k}r_{1}^{k},\ldots,d_{n}^{-k}r_{n}^{k},d_{n+1}^{-k}t_{1}^{k},%
\cdots,d_{2n}^{-k}t_{n}^{k}\right) \right)  \notag \\
& =\sum_{k\geq0}\textnormal{diag}\left( d_{1}^{-(k+1)}\varphi\left(
r_{1}^{k}\right) ,\ldots,d_{2n}^{-(k+1)}\varphi\left( t_{n}^{k}\right)
\right)  \notag \\
& =\textnormal{diag}\left( G_{r_{1}}(d_{1}),\ldots,G_{t_{n}}(d_{2n})\right) .
\notag
\end{align}

\noindent\textit{Proof of Theorem \ref{Thm:CauchyTXkl}.} Observe that 
\begin{equation*}
\widehat{Mx}J^{-1}=\left( 
\begin{matrix}
0 & DPx \\ 
(DP)^{\ast}x^{\ast} & 0%
\end{matrix}
\right) \left( 
\begin{matrix}
J_{1}^{-1} & 0 \\ 
0 & J_{2}^{-1}%
\end{matrix}
\right) =\left( 
\begin{matrix}
0 & DPJ_{2}^{-1}x \\ 
(DP)^{\ast}J_{1}^{-1}x^{\ast} & 0%
\end{matrix}
\right) . 
\end{equation*}
Thus, 
\begin{equation*}
\left( \widehat{Mx}J^{-1}\right) ^{2}=\left( 
\begin{matrix}
DPJ_{2}^{-1}(DP)^{\ast}J_{1}^{-1}xx^{\ast} & 0 \\ 
0 & (DP)^{\ast}J_{1}^{-1}DPJ_{2}^{-1}x^{\ast}x%
\end{matrix}
\right) . 
\end{equation*}
Since $P^{\top}D^{\prime}P$ and $PD^{\prime}P^{\top}$ are diagonal for any
diagonal matrix $D^{\prime}$, and diagonal matrices commute, we have for $%
n\geq1$ that 
\begin{equation*}
\left( \widehat{Mx}J^{-1}\right) ^{2n}=\left( 
\begin{matrix}
J_{1}^{-n}(DPJ_{2}^{-1}(DP)^{\ast})^{n}(xx^{\ast})^{n} & 0 \\ 
0 & ((DP)^{\ast}J_{1}^{-1}DP)^{n}J_{2}^{-n}(x^{\ast}x)^{n}%
\end{matrix}
\right) . 
\end{equation*}
Recalling that the odd moments of $x$ are zero, the previous equation
implies 
\begin{align*}
G_{\widehat{Mx}}(J) & =\sum_{n\geq0}J^{-1}E\left( \left( \widehat{Mx}%
J^{-1}\right) ^{n}\right) \\
& =\sum_{n\geq0}J^{-1}E\left( \left( \widehat{Mx}J^{-1}\right) ^{2n}\right)
\\
& =\sum_{n\geq0}\left( 
\begin{matrix}
J_{1}^{-(n+1)}(DPJ_{2}^{-1}(DP)^{\ast})^{n}\varphi\left(
(xx^{\ast})^{n}\right) & 0 \\ 
0 & ((DP)^{\ast}J_{1}^{-1}DP)^{n}J_{2}^{-(n+1)}\varphi\left(
(x^{\ast}x)^{n}\right)%
\end{matrix}
\right) .
\end{align*}
Finally, let $\pi$ be the permutation associated to $P$, then $[PD^{\prime
}]_{k}=[D^{\prime}]_{\pi(k)}$ for any diagonal matrix $D^{\prime}$ and any $%
1\leq k\leq n$. Therefore\footnote{%
For notational simplicity, let $D_{k}^{\prime}$ denote the $k,k$th entry of
the diagonal matrix $D^{\prime}$.} 
\begin{align}
G_{\widehat{Mx}}(J) & =\left( 
\begin{matrix}
(DPJ_{2}^{-1}(DP)^{\ast})^{-1} & 0 \\ 
0 & ((DP)^{\ast}J_{1}^{-1}DP)^{-1}%
\end{matrix}
\right) \times  \notag \\
& \quad\quad\sum_{n\geq0}\left( 
\begin{matrix}
J_{1}^{-(n+1)}(DPJ_{2}^{-1}(DP)^{\ast})^{n+1}\varphi\left(
(xx^{\ast})^{n}\right) & 0 \\ 
0 & ((DP)^{\ast}J_{1}^{-1}DP)^{n+1}J_{2}^{-(n+1)}\varphi\left( (x^{\ast
}x)^{n}\right)%
\end{matrix}
\right)  \notag \\
& =\left( 
\begin{matrix}
(DPJ_{2}^{-1}(DP)^{\ast})^{-1} & 0 \\ 
0 & ((DP)^{\ast}J_{1}^{-1}DP)^{-1}%
\end{matrix}
\right) \times  \notag \\
& \quad\quad\textnormal{diag}\left( G_{xx^{\ast}}([J_{1}]_{1}[J_{2}]_{\pi
(1)}|D_{1}|^{-2}),\ldots,G_{x^{\ast}x}([J_{1}]_{\pi^{-1}(n)}[J_{2}]_{n}|D_{%
\pi^{-1}(n)}|^{-2})\right)  \notag \\
& =\mathrm{diag}([J_{2}]_{\pi(1)}|D_{1}|^{-2}G_{xx^{%
\ast}}([J_{1}]_{1}[J_{2}]_{\pi(1)}|D_{1}|^{-2}),\ldots  \notag \\
&
\quad\quad\ldots,[J_{1}]_{\pi^{-1}(n)}|D_{\pi^{-1}(n)}|^{-2}G_{x^{%
\ast}x}([J_{1}]_{\pi^{-1}(n)}[J_{2}]_{n}|D_{\pi^{-1}(n)}|^{-2})).  \notag
\end{align}

\section*{Acknowledgment}

Mario Diaz was supported in part by the Centro de Investigaci\'{o}n en Matem%
\'{a}ticas A.C., M\'{e}xico and the government of Ontario, Canada.

\end{document}